\documentclass[11pt]{article}
\usepackage{geometry}
\usepackage{amsfonts}
\usepackage{amssymb}
\usepackage{graphicx}
\usepackage{amsmath}
\usepackage{amsthm}
\usepackage{enumitem}
\usepackage{tikz}
\usepackage{mathrsfs}
\usepackage{cancel}
\usepackage{todonotes}
\usepackage{cases}
\usetikzlibrary{matrix}
\usetikzlibrary{arrows,shapes}
\usepackage{booktabs}
\usepackage{cite}
\usepackage{bm}
\usepackage{hyperref}
\hypersetup{colorlinks=true, urlcolor=blue}
\expandafter\let\expandafter\oldproof\csname\string\proof\endcsname
\let\oldendproof\endproof
\renewenvironment{proof}[1][\proofname]{%
  \oldproof[\ttfamily \scshape \bf #1. ]%
}{\oldendproof}

\usepackage{float}
\usepackage{algorithm}
\usepackage{algpseudocode}

\def\A{{\cal A}}

\def\s{{\mathcal S}}
\def\B{\mathbb{B}}
\def\R{{\mathbb R}}
\def\oR{\overline{\R}}
\def\N{{\rm I\!N}}

\def\ox{{\bar{x}}}
\def\tx{\tilde{x}}
\def\ty{\tilde{y}}
\def\oy{\bar{y}}
\def\oz{\bar{z}}
\def\ov{\bar{v}}

\def\bu{{\bf u}}
\def\bv{{\bf v}}
\def\X{{\R^n}}
\def\Y{{\R^m}}

\def\H{{\bf H}}

\def \b{{\}_{k\in\N}}}

\def\tto{\rightrightarrows}

\def\Lm{{\Lambda}}
\def\tto{\rightrightarrows}

\def\sub{\partial}

\def\ra{\rangle}
\def\la{\langle}
\def\ve{\varepsilon}

\def\dd{\delta}

\def\ph{\varphi}

\def\toset_#1{\xrightarrow{#1}}
\DeclareMathOperator*{\mini}{minimize\;}
\DeclareMathOperator*{\argmin}{arg\, min}
\def\d{{\rm d}}
\def\dist{{\rm dist}}
\def\spann{{\rm span}\,}
\def\rge{{\rm rge\,}}

\def\cone{{\rm cone}\,}
\def\ri{{\rm ri}\,}

\def\gph{{\rm gph}\,}
\def\epi{{\rm epi}\,}

\def\dom{{\rm dom}\,}

\def\ker{{\rm ker}\,}

\def\sm{\hbox{${1\over 2}$}}

\def\K{{\overline{  K}}}

\def\x1k{{x^{k-1}}}
\def\xk{{x^k}}
\def\xkk{{x^{k+1}}}
\def\yk{y^k}
\def\ykk{y^{k+1}}
\def\zk{z^k}
\def\wk{w^k}
\def\uk{u^k}

\def\l1k{{\lambda^{k-1}}}

\def\sig1k{{\sigma_{k-1}}}
\def\sigk{{\sigma_k}}

\def\ro1k{{\rho_{k-1}}}

\def\e1k{{\epsilon_{k-1}}}

\def\Hk{{H_k}}

\def\ep3s{{\varepsilon_3}}
\def\el3l{{\ell_3}}
\def\hyk{\hat y^k}
\def\Ci{{C_{i_0}}}
\def\ii{{i_0}}

\begin{document}
\begin{center}
{\Large \bf Stabilized SQP Method for Convex Piecewise Linear-Quadratic Composite Optimization Problems}\\[2ex]
NGUYEN T. V. HANG\footnote{Faculty of Mathematics, University of Duisburg-Essen, 45127 Essen, Germany, and Institute for Artificial Intelligence, VNU University of Engineering and Technology, Hanoi, Vietnam (ntvhang@vnu.edu.vn). Research of this author is partially supported by the Alexander von Humboldt Foundation.} 
 and   EBRAHIM SARABI\footnote{Department of Mathematics, Miami University, Oxford, OH 45065, USA (sarabim@miamioh.edu). Research of this    author is partially supported by the U.S. National Science Foundation  under the grant DMS 2108546.
 }
\end{center}
\vspace*{0.05in}

\small{\bf Abstract.} 
This paper addresses the local convergence analysis of the stabilized sequential quadratic programming method for a class of composite optimization problems with convex piecewise linear-quadratic (CPLQ) modeling functions. 
We demonstrate that an appropriate second-order sufficient condition (SOSC)---formulated for composite problems---alone is sufficient to guarantee local superlinear convergence of the primal-dual sequence generated by this method. 
Furthermore, we examine the relaxation of the SOSC and show that it can be replaced with a weaker noncriticality of the associated Lagrange multiplier, provided, the modeling function is polyhedral and a relative interior condition is satisfied. 
We also present counterexamples showing that the relative interior condition is indispensable and that this relaxation fails for general CPLQ modeling functions.
 \\[1ex]
{\bf Keywords}. SQP, stabilized SQP, superlinear convergence, second-order sufficient condition, noncritical Lagrange multiplier, nonunique Lagrange multiplier, convex piecewise linear-quadratic     \\[1ex]
{\bf Mathematics Subject Classification (2000)} 65K05, 90C55, 90C31, 49J52, 49J53

\newtheorem{Theorem}{Theorem}[section]
\newtheorem{Proposition}[Theorem]{Proposition}
\newtheorem{Lemma}[Theorem]{Lemma}
\newtheorem{Corollary}[Theorem]{Corollary}

\theoremstyle{definition}
\newtheorem{Definition}[Theorem]{Definition}
\newtheorem{Algorithm}[Theorem]{Algorithm}
\newtheorem{Example}[Theorem]{Example}
\newtheorem{Remark}[Theorem]{Remark}

\numberwithin{equation}{section}

\renewcommand{\thefootnote}{\fnsymbol{footnote}}

\normalsize

\section{Introduction} \label{sect:Intro}
This paper aims to investigate the local convergence of a stabilized version of the classical sequential quadratic programming (SQP) method for an important class of the composite optimization problem
\begin{equation}\label{comp}
    \mini_{x\in \X} \psi(x):=\ph(x)+ g(\Phi(x)),
\end{equation} 
where  $\ph:\X\to \R$ and $\Phi:\X\to \Y$ are twice continuously differentiable and the modeling function $g: \Y \to \oR:=[-\infty,\infty]$ is   convex piecewise linear-quadratic (CPLQ).
Recall from \cite[Definition~10.26]{rw} that a  function $g:\R^m\to \oR$ is piecewise linear-quadratic if  its domain $\dom g$ is the union of finitely many polyhedral convex sets $C_i$, indexed by $i\in I$, and on each polyhedral piece $C_i$,  the function $g$ has a quadratic representation in the form
\begin{equation} \label{PWLQ}
g(z) = \sm \langle A_i z ,z \rangle + \langle a_i ,z \rangle + \alpha_i  \quad \mbox{for all} \quad  z \in C_i,
\end{equation}
where $A_i$ is an $m \times m$ symmetric matrix, $a_i\in \R^m$, and $\alpha_i\in \R$ for all $i \in I$.  Due to their strong computational properties such as preservation under addition, linear composition (cf. \cite[Exercise~10.22]{rw}), and the Legendre-Fenchel transform (cf. \cite[Theorem~11.14]{rw}), CPLQ functions are widely used to model optimization problems across  machine learning, sparse/robust statistical inference, and stochastic optimization (see, e.g., \cite{ABP13, QiW95, RoW86}). 

A fundamental subclass of CPLQ functions consists of polyhedral functions---functions whose epigraphs are convex polyhedral sets.
This class encompasses popular regularization terms such as the Lasso, which plays a key role in least-squares regression in machine learning. 
Furthermore, by choosing $g$ as the pointwise maximum function, the composite problem \eqref{comp} covers minimax optimization problems. 
Note also that the classical nonlinear programs (NLPs) with $m_1$ inequality and $m_2$ equality constraints can be formulated via \eqref{comp} by setting $g$ as the indicator function $\dd_{\R^{m_1}_-\times \{0\}^{m_2}}$. 
Another important instance of CPLQ functions is the quadratic penalty function 
\begin{equation}\label{pen}
    g(z) :=\theta_{Y,Q}(z)= \sup_{w\in Y}\big\{\la z, w\ra - \tfrac{1}{2}\la Qw, w\ra\big\}, \quad z\in \Y,
\end{equation} 
where  $Y\subset \Y$ is a convex polyhedral set and $Q \in \R^{m\times m}$ is a  positive semidefinite matrix  (cf.~ \cite[Example~11.18]{rw}).
This penalty function appeared first in \cite{r87} in the context of extended linear-quadratic optimization problems.

In this paper, we mainly focus on the local convergence analysis of 
a stabilized version of the SQP method for  \eqref{comp}. Recall that 
iterations of the classical SQP method for the composite problem \eqref{comp} consist of solving the piecewise linear-quadratic subproblems 
\begin{align}\label{SQP}
\mini_{x\in \X}\, \Big\{ \la\nabla \varphi(\xk),x-\xk\ra + \tfrac{1}{2}\la\nabla^2_{xx}L(\xk, \yk)(x-\xk), x-\xk\ra\nonumber\;&\\
+\ g(\Phi(\xk) + \nabla\Phi(\xk)(x-\xk))\Big\},&
\end{align}
where $L(x, y) = \ph(x) + \la y, \Phi(x)\ra$, for $(x, y) \in \X\times\Y$, is the Lagrangian associated with \eqref{comp}, and $(\xk, \yk) \in \X\times\Y$ is the current primal-dual iterate. The new dual variable $\ykk$ then can be chosen as a Lagrange multiplier associated with a local minimizer $\xkk$ of \eqref{SQP}.
Another way to update the primal-dual iterate $(\xk, \yk)$ is to consider the next iterate $(\xkk, \ykk)$ as a solution to  the KKT system of \eqref{SQP}, which can be formulated as a variational system 
\begin{equation*}\label{SQP-KKT}
    \begin{cases}
        0= \nabla\ph(\xk) + \nabla \Phi(\xk)^*y + \nabla^2_{xx}L(\xk, \yk)(x-\xk),\\
        y\in \sub g\big(\Phi(\xk) + \nabla\Phi(\xk)(x-\xk)\big). 
    \end{cases}
\end{equation*}
The local convergence of the SQP method for \eqref{comp} was first studied in \cite[Theorem~9]{be} under a strong version of the second-order sufficient condition (SOSC) in combination with the nondegeneracy and strict complementary conditions. 
This was indeed an extension of a similar result for NLPs, established by  Robinson in \cite{rob74,rob80},  showing that the strong SOSC for NLPs, combined with the linear independence constraint qualification, ensures local superlinear/quadratic convergence of the SQP primal-dual sequence. 
This observation was substantially improved  in \cite{bo94}, where it was proven for the first time that those assumptions can be replaced with the classical SOSC and the uniqueness of Lagrange multipliers. 

Following the approach in \cite{bo94}, the local convergence of the SQP in \cite{be} was improved in \cite[Theorem~5.4]{s20} for the composite problem \eqref{comp}.
Specifically, the local superlinear/quadratic convergence of the SQP primal-dual sequence $\{(\xk,\yk)\b$, generated according to \eqref{SQP}, was established under a suitable adaptation of the classical SOSC for NLPs to the problem \eqref{comp} (see \eqref{sosc}), together with the dual condition 
\begin{equation}\label{dual}
    K_ g \big(\Phi(\bar x),\oy \big)^*\cap \ker \nabla \Phi(\ox)^* =\{0\},
\end{equation}
at the KKT point $(\ox, \oy)$ in question,
where $K_ g \big(\Phi(\bar x),\oy \big)$ denotes the critical cone of $g$ defined in \eqref{domcr}. 
Note that the condition \eqref{dual} amounts to the uniqueness of Lagrange multipliers associated with $\ox$ (cf. \cite[Proposition~3.10]{s20}).
This raises the fundamental question of whether local convergence analysis of the SQP method can be established for problems with nonunique Lagrange multipliers---hereafter referred to as degenerate problems. 
The stablized SQP (sSQP) method was introduced by Wright in \cite{w98, w02} for NLPs as a way to study the local convergence of SQP methods for degenerate NLPs. Given the current iterate $(\xk,\yk)$,  the subproblems of the sSQP method for NLPs with equality constraints, obtained from \eqref{comp} with $g=\dd_{\{0\}^m}$, 
can be formulated as 
\begin{equation}\label{ssqp-nlp}
 \begin{cases}
\mini\limits_{(x,y)\in \X\times \R^m}\,  \la\nabla \varphi(\xk),x-\xk\ra + \tfrac{1}{2}\la\nabla^2_{xx}L(\xk, \yk)(x-\xk), x-\xk\ra+\frac{\sigma_k}{2}\|y\|^2 \\
\mbox{subject to}\quad \Phi(\xk) + \nabla\Phi(\xk)(x-\xk)-\sigma_k(y-\yk)=0,
\end{cases}   
\end{equation}
where $\sigma_k>0$ is called the dual stabilization parameter. 
A major practical challenge in implementing the standard SQP method is the potential infeasibility of its subproblems as discussed in  \cite{fd}. 
In contrast, the sQP method incorporates elastic terms into the linearized constraints of its subproblems, thereby ensuring their feasibility without requiring any constraint qualification. 
In \cite{w98}, Wright demonstrated  that the local superlinear convergence of the sSQP primal-dual iterates can be ensured under the classical SOSC, together with the  Mangasarian-Fromovitz constraint qualification (MFCQ) and the strict complementarity condition. 
Note that the MFCQ corresponds to the boundedness of the set of Lagrange multipliers, thus accommodating NLPs with nonunique multipliers. 
Hager \cite{ha98} subsequently improved the result by  showing that both   MFCQ and   strict complementarity condition could be dropped if a stronger variant of the SOSC was adopted; see also \cite{Fis02} for a different approach that leads to a similar conclusion.
Whether a comparable convergence result could be achieved under the classical SOSC alone, without any constraint qualification, remained an open question until \cite{FeS10} proved that the classical SOSC is indeed sufficient to ensure local superlinear convergence of the sSQP primal-dual iterates. 

Recently, efforts have been made to extend the local convergence analysis of the sSQP method to broader classes of constrained optimization problems, such as semidefinite programming problems \cite{ya,zc}. 
However, these existing results all require the uniqueness of Lagrange multipliers, thereby leaving the local convergence analysis of sSQP for degenerate problems largely unaddressed. 
To fill this gap, our main objective is to demonstrate that second-order variational analysis provides effective tools for establishing a counterpart to the main result in \cite{FeS10} for the composite problem \eqref{comp}. 
As highlighted by Hager in \cite[page~254]{ha98}, the sSQP algorithm in \cite{w98} for NLPs is closely related to the augmented Lagrangian method (ALM; see, e.g., \cite{HaS21}) applied to the standard SQP subproblems \eqref{SQP}. 
Guided by this observation, we extend the sSQP framework to the composite optimization problem \eqref{comp}. 
That is, given the current iterate $(\xk, \yk)\in \X\times\Y$ and a dual stabilization parameter $\sigma_k>0$, the new sSQP iterate $(\xkk,\ykk)$ is generated by
\begin{equation}\label{sSQP}
\begin{cases}
    \xkk \in \argmin\limits_{x\in \X}\;  \Big\{\la\nabla \ph(\xk),x-\xk\ra  + \tfrac{1}{2}\la\nabla^2_{xx}L(\xk, \yk)(x-\xk), x-\xk\ra\\
    \hspace{1.8in} + \,(e_{\sigma_k}g)\big(\Phi(\xk) + \nabla\Phi(\xk)(x-\xk) + \sigma_k\yk\big)\Big\},\\
     \ykk = \nabla (e_{\sigma_k}g)\big(\Phi(\xk) + \nabla\Phi(\xk)(x-\xk) + \sigma_k\yk\big),
    \end{cases}
\end{equation}
where $e_\sigma g$ stands for the Moreau envelope of $g$ for a parameter $\sigma>0$, defined by 
\begin{equation*}\label{Menv}
(e_\sigma g)(z):=\inf_{w\in \Y}\big\{g(w)+\tfrac{1}{2}\sigma^{-1}\|w-z\|^2\big\}, \quad z\in \Y.
\end{equation*}
We emphasize that this connection between the sSQP method and ALM plays a pivotal role in establishing the key error bound estimates in our analysis.
Specifically, we utilize an approach analogous to our recent work \cite{HaS21} on the local convergence analysis of the ALM to justify Proposition~\ref{prop:eb}, which is an important tool toward proving the solvability of the sSQP subproblems. 

According to the characterization of the mapping $\nabla (e_\sigma g)$ (see \eqref{prm}), the primal-dual update $(\xkk,\ykk)$ in \eqref{sSQP} can be interpreted as finding a solution $(x, y)$ to the variational system
\begin{equation}\label{sSQP-KKT}
    \begin{cases}
        0= \nabla\ph(\xk)  + \nabla^2_{xx}L(\xk, \yk)(x-\xk) + \nabla \Phi(\xk)^*y,\\
        y \in \sub g\big(\Phi(\xk) + \nabla\Phi(\xk)(x-\xk) - \sigma_k(y - \yk)\big). 
    \end{cases}
\end{equation}
Indeed, this  is the KKT system of the smooth optimization problem in \eqref{sSQP}. 
Alternatively, following the idea in \eqref{ssqp-nlp}, we can formulate the sSQP subproblem for \eqref{comp} as
\begin{align}\label{sSQP-2}
    \mini_{(x, y) \in \X\times \Y} &\;\la\nabla \ph(\xk),x-\xk\ra + \tfrac{1}{2}\la\nabla^2_{xx}L(\xk, \yk)(x - \xk), x - \xk\ra\nonumber\\
    &\; +g\big(\Phi(\xk) + \nabla\Phi(\xk)(x - \xk) - \sigma_k(y - \yk)\big) + \tfrac{1}{2}\sigma _k\|y\|^2.
\end{align}
It is worth noting that any primal-dual solution $(\xkk, \ykk)$ to \eqref{sSQP-2}, if exists, satisfies the system of first-order optimality conditions for \eqref{sSQP} formulated in \eqref{sSQP-KKT}.

Our main contributions are twofold. First, we show, through an auxiliary variational system and the reduction lemma for CPLQ functions, that \eqref{sSQP-KKT} is always solvable under the SOSC. 
Combining this existence result with a new error bound estimate for consecutive sSQP iterates, we prove the local superlinear convergence of the sSQP primal-dual sequence. 
Second, we investigate the relaxation of the SOSC to the noncriticality of  Lagrange multipliers, demonstrating that such a weakening is possible when $g$ in \eqref{comp} is a polyhedral function and the Lagrangian multiplier satisfies a relative interior condition. 
The remainder of the paper is organized as follows. 
Section~\ref{sect:pre} introduces necessary notation, key variational properties of CPLQ functions, and an algorithmic framework from \cite{Fis02} for establishing superlinear convergence. 
Section~\ref{sect:sosc} analyzes the solvability of the variational system \eqref{sSQP-KKT} under the SOSC, while Section~\ref{sec:noncri} addresses the same issue under the noncriticality of Lagrange multipliers for polyhedral modeling functions. 
We also present counterexamples to rule out the possibility of such a result for the CPLQ case. 
Finally, Section~\ref{sec:convs} establishes our main local convergence results for the sSQP method under both conditions.

\section{Preliminaries}\label{sect:pre}
\subsection{Notation}
Throughout the paper, $\B$ denotes the closed unit ball in the space under consideration and $\B_r(x):=x+r\B$ represents the closed ball centered at $x$ with radius $r>0$.
The product space $\X\times \Y$ is endowed with the norm $\|(u,w)\|=\|u\|+\|w\|$.
For a matrix $A \in \R^{m\times n}$, $A^*$ stands for its transpose.
For a nonempty set $C\subset\X$, we denote by $\ri C$, $C^*$, $\spann C$, and $\cone C$ its relative interior (the interior relative to the affine hull of $C$), polar cone, linear span, and the closed conical hull, respectively. 
The indicator function of $C$ is defined by $\dd_C(x)=0$ for $x\in C$ and $\dd_C(x)=\infty$ otherwise, while $P_C$ and $\dist(\cdot,C)$ denote the Euclidean projection onto $C$ and the distance function to $C$.
For a nonzero vector $w\in \X$, $[w] := \{tw~|~t\in \R\}$ denotes the line spanned by $w$, and $[w]^\perp$ is its orthogonal complement. 
The sets of nonnegative and nonpositive numbers are denoted by $\R_+$ and $\R_-$, respectively.

For a nonempty set $\Omega\subset\X$ and $\ox\in \Omega$, the tangent cone  to $\Omega$ at $\ox$, denoted $T_\Omega(\ox)$,  is defined  by
\[
T_\Omega(\ox) = \big\{u\in \X~\big|~ \exists t_k\searrow0,\, u^k \to u\, \text{ with }\, \ox + t_k u^k \in \Omega\big\}.
\]
If $\Omega$ is convex, then the normal cone to $\Omega$ at $\ox$ is given by 
\[
N_\Omega(\ox) = T_\Omega(\ox)^* = \big\{ v\in \X~\big|~\la v, x- \ox\ra \leq 0 \, \text{ for all } x\in \Omega\big\}.
\]

For a function $f:\X \to \oR$, its domain and epigraph are defined, respectively, by
\[
\dom f = \big\{x\in \X~\big|~ f(x)<\infty\big\}\quad\text{ and } \quad
\epi f = \big\{(x, \alpha)\in \X \times \R~\big|~ f(x)\leq \alpha\big\}.
\]
Assuming that $f$ is twice differentiable at $\ox$ with gradient $\nabla f(\ox)$ and Hessian $\nabla^2f(\ox)$, we use the same notation $\nabla^2f(\ox)$ to denote the self-adjoint linear operator of $\X$ associated with the bilinear symmetric form $\nabla^2f(\ox):\X\times\X\to \R$.
Fix a point $\ox\in\X$ with $f(\ox)$ finite, the subderivative function $\d f(\ox)\colon\X\to\oR$ is defined by
\begin{equation*}\label{fsud}
\d f(\ox)(w)=\liminf_{\substack{
   t\searrow 0 \\
  w'\to w
  }} \, \frac{f(\ox+tw')-f(\ox)}{t}.
\end{equation*} 
 
 Assume that $f$ is convex, its subdifferential at $\ox$ is given by
\[
\sub f(\ox) = \big\{ v\in \X~\big|~ f(x) \geq f(\ox) + \la v, x- \ox\ra \, \text{ for all } x\in \X\big\}.
\]
The critical cone of $f$ at $\ox$ for $\bar v$ with $\bar v\in   \sub f(\ox)$ is defined by 
\begin{equation*}\label{cricone}
{K_f}(\ox,\bar v)=\big\{w\in \X\,\big|\,\la\bar v,w\ra=\d f(\ox)(w)\big\}.
\end{equation*}
When $f=\dd_\Omega$, where $\Omega$ is a nonempty subset of $\X$, the critical cone of $\dd_\Omega$ at $\ox$ for $\ov$ is denoted by $K_\Omega(\ox,\ov)$. In this case, the above definition of the critical cone of a function 
boils down to  the well-known concept of the critical cone of a set (see \cite[page~109]{dr}), namely $K_\Omega(\ox,\ov)=T_\Omega(\ox)\cap [\ov]^\perp$. 
The {second subderivative} of $f$ at $\ox$ for $\ov \in   \sub f(\ox)$, denoted $\d^2 f(\bar x , \ov)$, is an extended-real-valued function defined  by 
\begin{equation*}\label{ssd}
\d^2 f(\bar x , \ov)(w)= \liminf_{\substack{
   t\searrow 0 \\
  w'\to w
  }} \frac{f(\ox+tw')-f(\ox)-t\langle \ov,\,w'\rangle}{\frac {1}{2}t^2},\;\; w\in \X.
\end{equation*}
Recall that  the proximal mapping of  a convex function $f:\X\to \oR$ is defined by 
\begin{equation}\label{pr}
{\rm{prox}}_{\sigma f}(x)=\argmin_{u\in \X} \big\{\sigma f(u)+\tfrac{1}{2}\|u-x\|^2\big\},\quad x\in\X,
\end{equation}
where $\sigma$ is a positive constant. 
In what follows, when $\sigma = 1$, the proximal mapping of $f$ will be denoted by ${\rm prox}_f$. 
It follows from \cite[Theorem~2.26]{rw} and optimality condition for \eqref{pr} that for any $x\in \X$ we always have 
\begin{equation}\label{prm}
y= \nabla (e_\sigma f)(x)=\sigma^{-1}\big(x - {\rm{prox}}_{\sigma f}(x)\big) \iff
y \in \sub f(x - \sigma y).
\end{equation}

For  a set-valued mapping $F: \X \rightrightarrows \Y$, its graph and domain are defined, respectively,  by  
\[
\gph F=\big\{(x,y)\in \X\times \Y~\big|~y\in F(x)\big\}\quad\text{ and }\quad
\dom F= \big\{ x\in \X~\big|~ F(x)\neq \emptyset\big\}.
\]
According to \cite[Definition~8.33]{rw}, the {graphical derivative} of $F$ at $\ox$ for $\oy \in F(\ox)$ is the set-valued mapping $DF(\ox, \oy): \X\tto \Y$ defined by
\begin{equation*}\label{gder}
\eta \in DF(\ox, \oy)(w)\; \Longleftrightarrow\; (w,\eta)\in T_{\gph F}(\ox, \oy),
\end{equation*}
or, equivalently, $\gph DF(\ox, \oy) = T_{\gph F}(\ox, \oy)$.

\subsection{Variational Properties of CPLQ Functions}  \label{sect:CPLQ}

Let the model function $g:\R^m\to \oR$ be CPLQ with the representation \eqref{PWLQ}. Take $\oz\in \dom g = \cup_{i\in I}C_i$ and define the {active} indices  of the domain of $g$ at $\oz$ by
$I(\oz)=\big \{i\in I ~\big|~ \oz\in C_i\big \}$. 
It follows   from \cite[page~487]{rw} that the subdifferential of $g$ at  $\oz$ can be calculated by 
\begin{equation*}\label{sub}
\sub g(\oz)=\bigcap_{i\in I(\oz)} \big \{y\in \R^m~\big|~y-A_i \oz- a_i\in N_{ C_i}(\oz)\big \}.
\end{equation*}
Fix $\oy \in \sub g(\oz)$ and set $\oy_i:=\oy -A_i \oz -a_i$ for all $i\in I(\oz)$. 
The critical cone of $g$ at $\oz$ for $\oy$ is then given by
\begin{equation}\label{domcr}
K_g(\oz,\oy) = N_{\sub g(\oz)}(\oy) =\bigcup_{i\in I(\oz)} {K}_{C_i}(\oz,\oy_i).
\end{equation}
According to to \cite[Proposition~13.9]{rw}, the second subderivative of $g$ at $\oz$ for $\oy$, calculated for any $w\in \R^m$ by
\begin{equation*}\label{pwfor}
 \d^2 g(\oz , \oy ) (w)  
=\begin{cases}
\la A_i w, w\ra&\mbox{if}\;\; w\in {K}_{C_i}(\oz,\oy_i),\\
\infty&\mbox{otherwise}.
\end{cases}
\end{equation*} 
Let $B$ be an $m\times n$ matrix and 
$h(u):= \d^2 g(\oz , \oy ) (Bu)$, for $u\in \R^n$.
It follows from the chain rule in \cite[Corollary~3.8]{mms22} that 
\begin{equation}\label{sub_cplq}
 \sub h(u)=B^*\sub\big(  \d^2 g(\oz , \oy ) \big)(Bu).
\end{equation}
Note that no constraint qualification is needed for this chain rule to be valid. 
Appealing then to \cite[Proposition~13.9 \& Theorem~13.40]{rw}, we can calculate the graphical derivative of $\sub g$ at $\oz$ for $\oy$ as 
\begin{equation}\label{pwfor1}
 D(\sub g)(\oz, \oy)(w) = \sub\big(\tfrac{1}{2}\d^2 g(\oz , \oy )\big) (w) 
 =\bigcap_{i\in I(\oz, \oy, w)}\big\{v\in\R^m~\big|~v-A_i w\in N_{K_{C_i}(\oz,\oy_i)}(w)\big\}
\end{equation}
for any $w\in K_g(\oz,\oy)$, where $I(\oz, \oy, w):=\big\{i\in I(\oz)~\big|~w\in K_{C_i}(\oz,\oy_i)\big\}$.
 In particular, we have 
 \begin{equation}\label{pwfor2}
    D(\sub g)(\oz,\oy)(0)=K_g(\oz,\oy)^*.
 \end{equation}
We close this subsection with the reduction lemma for CPLQ functions, taken from \cite[Theorem~2.3]{s20}, that is essential in our subsequent analysis.
Let $(\oz,\oy)\in \gph \sub g$. The lemma states that graphs of the subgradient mapping $\sub g$, shifted to the origin, and of its graphical derivative at $\oz$ for $\oy$, $D(\sub g)(\oz, \oy)$, coincide locally around $(0, 0)$.
Namely, there exists a neighborhood ${\cal O}$ of $(0,0)\in \R^m\times \R^m$
for which we have
\begin{equation}\label{rl2}
\big ( (\gph \sub g) - (\oz,\oy)\big )\cap {\cal O}=\big (\gph D(\sub g)(\oz,\oy)\big )\cap {\cal O}.
\end{equation}

\subsection{An Iterative Framework for sSQP}\label{sect:fischer}

As shown in Section~\ref{sect:Intro}, the primal-dual sequence $\{(\xk,y^k)\b$, generated by the sSQP method, are solutions to a particular perturbation of the KKT system associated with the composite optimization problem \eqref{comp}, which is formulated as 
\begin{equation}\label{vs} 
  0=\nabla_x L(x,y),\;\;y\in \sub g(\Phi(x)).
\end{equation}
The latter can be written as a generalized equation 
\begin{equation}\label{geco} 
\begin{bmatrix}
0\\ 0
\end{bmatrix}\in  \begin{bmatrix}
\nabla_xL(x, y)\\ -\Phi(x)
\end{bmatrix} +
\begin{bmatrix}
0\\ (\partial g)^{-1}(y)
\end{bmatrix}.
\end{equation}
To elaborate more on the kind of perturbation we should consider for this generalized equation,  we briefly recall an abstract iterative framework, suggested by Fischer in \cite{Fis02}, for solving generalized equations with nonisolated solutions.
Given the mappings $\Psi: \H\to \H'$ and $G:\H \tto\H'$, where $\H$ and $\H'$ are finite dimensional Hilbert spaces,  consider the generalized equation formulated by
\begin{equation}\label{ge}
0\in \Psi(\bu) + G(\bu).
\end{equation}
Define the  solution mapping $\s:\H'\tto \H$ to the canonical perturbation of \eqref{ge} by    
\begin{equation}\label{solm}
\s(\bv)=\big\{\bu\in \H|\, \bv \in \Psi(\bu) +G(\bu)\big\}, \;\;\bv\in \H'.
\end{equation}
Iterative algorithms for solving \eqref{ge} often generate a sequence $\{\bu^k\b$ by iteratively solving subproblems of the form
\begin{equation}\label{age}
0\in \A(\bu, \bu^k)+G(\bu),
\end{equation}
in which  $\A:\H\times \H\to \H'$ is a certain linearization/approximation of the single-valued mapping $\Psi$ in \eqref{ge} around the current iterate, while the set-valued mapping is kept unchanged. 
In order to ensure the convergence and to establish the rate of convergence of $\{\bu^k\b$, we have to  choose a solution to \eqref{age} satisfying certain properties. 
Namely, given the current iterate $\bu^k$, we choose the next iterate $\bu^{k+1}$ sufficiently close to $\bu^k$ satisfying
\begin{equation}\label{ukk}
\bu^{k+1} \in \big\{ \bu\in \H\, \big|\, 0\in \A(\bu, \bu^k)+ G(\bu)\; \textrm{ and }\; \|\bu-\bu^k\|\leq c\, \dist(\bu^k,    \s(0))\big\},
\end{equation}
where $ c>0$ is arbitrary yet fixed, and $\s(0)$, taken from \eqref{solm} with $\bv=0$, is the set of solutions to \eqref{ge}. 
Below, we recall a result, established in \cite[Theorem~1]{Fis02} (see also \cite[Theorem~7.13]{is14}), in which the local convergence of the sequence $\{\bu^k\b$ was obtained under rather mild assumptions. 
 Recall that a set-valued mapping $F: \X\tto \Y$ is calm at $x$ for $y\in F(x)$ if there are neighborhoods $U$ of $x$ and $V$ of $y$ and a positive constant $\kappa$ for which we have 
$$
F(x')\cap V\subset F(x)+\kappa\,\|x'-x\|\B\quad \mbox{for all}\;\; x'\in U.
$$

\begin{Theorem}\label{thm:fischer} Let $\bar \bu \in \s(0)$, where $\s$ is the solution mapping from \eqref{solm}. Assume that $\s(0)$ is locally closed around $\bar \bu$ and that the following properties hold for some positive constant $c$:
\begin{enumerate}[noitemsep,topsep=2pt]
\item {\rm(}calmness of solution mapping{\rm)} the solution mapping $\s$ is calm at $0\in \H'$ for $\bar \bu$;

\item {\rm(}solvability of subproblems{\rm)} there exists a positive constant $\varepsilon_2$ such that for any $\tilde \bu \in \B_{\varepsilon_2}(\bar \bu)$   the localized solution set
\begin{equation*}
\big\{ \bu\in \H\, \big|\, 0\in \A(\bu, \tilde \bu)+ G(\bu)\; \textrm{ and }\; \|\bu-\tilde \bu\|\leq c\, \dist(\tilde \bu, \s(0))\big\}
\end{equation*}
is nonempty;

\item {\rm(}precision of approximation{\rm)} there exist a positive constant $\varepsilon_3$ such that 
\begin{equation*}
 \sup\big\{\|\Psi(\bu) - \A(\bu, \tilde \bu)\|~\big|~\bu\in \H\; \textrm{ with }\;\| \bu - \tilde \bu\|\leq  c\, \dist(\tilde \bu, \s(0))\big\}\leq o\big(\dist(\tilde \bu, \s(0))\big),
\end{equation*}
for all $\tilde \bu\in \B_{\varepsilon_3}(\bar \bu)$.
\end{enumerate}
Then there exists $\varepsilon_0>0$ such that for any starting point $\bu^0$ chosen from $\B_{\varepsilon_0}(\bar \bu)$, 
the iterative scheme in \eqref{ukk} 
generates a sequence $\{\bu^k\}_{k\in \N}$ that is convergent to some $\hat \bu \in \s(0)$. Moreover, the rate of convergence is Q-superlinear. 
\end{Theorem}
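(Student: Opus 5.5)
The proof is the standard Fischer-type argument: show that, near $\bar\bu$, each step reduces the distance to $\s(0)$ superlinearly, and then that the step lengths are summable. Throughout I write $d(\bu):=\dist(\bu,\s(0))$. Since $\s(0)$ is locally closed, for $\bu$ close to $\bar\bu$ there is a nearest point $\pi(\bu)\in\s(0)$ with $\|\bu-\pi(\bu)\|=d(\bu)\le\|\bu-\bar\bu\|$.

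\textbf{Step 1: one-step estimate.} Take $\bu^k$ in a small ball $\B_\delta(\bar\bu)$ with $\delta\le\min\{\varepsilon_2,\varepsilon_3\}$, and let $\bu^{k+1}$ be given by (\ref{ukk}). Such a point exists by the solvability assumption. Then $\|\bu^{k+1}-\bu^k\|\le c\,d(\bu^k)\le c\|\bu^k-\bar\bu\|$, so $\bu^{k+1}$ is also close to $\bar\bu$.

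Set $\bv:=\Psi(\bu^{k+1})-\A(\bu^{k+1},\bu^k)$. From $0\in\A(\bu^{k+1},\bu^k)+G(\bu^{k+1})$ we get $\bv\in\Psi(\bu^{k+1})+G(\bu^{k+1})$, that is, $\bu^{k+1}\in\s(\bv)$. The precision assumption gives $\|\bv\|\le o(d(\bu^k))$, so $\bv$ is small and $\bu^{k+1}$ lies in the neighborhood $V$ of $\bar\bu$ from calmness. Calmness of $\s$ at $0$ for $\bar\bu$ then yields
\[
d(\bu^{k+1})\le\kappa\|\bv\|=o\big(d(\bu^k)\big).
\]
Hence there is $\delta$ small enough that $d(\bu^{k+1})\le\tfrac12 d(\bu^k)$ whenever $\bu^k\in\B_\delta(\bar\bu)$.

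\textbf{Step 2: the iterates stay in the ball.} This is the main obstacle, because the estimate of Step 1 holds only while the iterates remain in $\B_\delta(\bar\bu)$. I would handle it by induction. Choose $\varepsilon_0$ so that $\varepsilon_0+2c\varepsilon_0\le\delta$, and assume $\bu^0,\dots,\bu^k\in\B_\delta(\bar\bu)$. Then $d(\bu^j)\le 2^{-j}d(\bu^0)\le 2^{-j}\varepsilon_0$ for $j\le k$, and
\[
\|\bu^{k+1}-\bar\bu\|\le\|\bu^0-\bar\bu\|+\sum_{j\le k}c\,d(\bu^j)\le\varepsilon_0+2c\varepsilon_0\le\delta,
\]
which closes the induction.

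\textbf{Step 3: convergence and rate.} The same summability, $\sum_j\|\bu^{j+1}-\bu^j\|\le c\sum_j d(\bu^j)<\infty$, shows that $\{\bu^k\}$ is Cauchy and converges to some $\hat\bu$. Since $d(\bu^k)\to0$, local closedness of $\s(0)$ gives $\hat\bu\in\s(0)$.

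For the rate, note that $\|\bu^k-\hat\bu\|\le\sum_{j\ge k}c\,d(\bu^j)$. Because $d(\bu^{j+1})=o(d(\bu^j))$, this tail sum is at most $(c+o(1))\,d(\bu^k)$. In the other direction, $d(\bu^k)\le\|\bu^k-\hat\bu\|$ since $\hat\bu\in\s(0)$. Combining these,
\[
\|\bu^{k+1}-\hat\bu\|\le(c+o(1))\,d(\bu^{k+1})=o\big(d(\bu^k)\big)=o\big(\|\bu^k-\hat\bu\|\big).
\]
This is Q-superlinear convergence.
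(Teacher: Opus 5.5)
Your argument is correct and is essentially the standard proof. The paper does not reprove this theorem; it imports it from Fischer's Theorem~1 (see also Izmailov--Solodov, Theorem~7.13), whose proof runs along your lines: precision plus calmness give $\dist(\bu^{k+1},\s(0))=o\big(\dist(\bu^k,\s(0))\big)$, a geometric-series induction keeps the iterates in the ball, and the tail bound $\|\bu^k-\hat\bu\|\le (c+o(1))\,\dist(\bu^k,\s(0))$ together with $\dist(\bu^k,\s(0))\le\|\bu^k-\hat\bu\|$ gives the Q-superlinear rate. The points you leave implicit are routine: choose $\delta$ so that $\B_{(1+c)\delta}(\bar\bu)$ lies in the calmness neighborhood and in the region where $\s(0)$ is closed, and note that in the case $\dist(\bu^k,\s(0))=0$ the scheme \eqref{ukk} forces $\bu^{k+1}=\bu^k$.
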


We next show that  the sSQP primal-dual sequence $\{(\xk,y^k)\b$ can fit into the iterative pattern described above.
To express the KKT system \eqref{vs} in the form of the generalized equation \eqref{ge}, we define the mappings $\Psi: \X\times \Y \to \X\times\Y$ and  $G: \X\times \Y \tto \X\times\Y$  by
\begin{equation}\label{ALMge}
\Psi(x, y) = \begin{bmatrix}
\nabla_xL(x, y)\\
-\Phi(x)
\end{bmatrix}
\quad\textrm{ and }\quad
G(x, y) =\begin{bmatrix}
0\\ (\partial g)^{-1}(y)
\end{bmatrix}.
\end{equation}
The KKT system  \eqref{sSQP-KKT} can be equivalently expressed in the form \eqref{age} as
\begin{equation}\label{age1}
   \begin{bmatrix}
       0\\0
   \end{bmatrix} \in \begin{bmatrix}
        \nabla_xL(\xk, \yk) + \nabla^2_{xx}L(\xk, \yk)(x-\xk) + \nabla \Phi(\xk)^*(y - \yk)\\
        -\Phi(\xk) - \nabla\Phi(\xk)(x-\xk) + \sigk(y - \yk)
    \end{bmatrix} + G(x, y),
\end{equation}
with $G$ defined in \eqref{ALMge}.
This suggests that the mapping $\A: \X\times \Y\times \X\times \Y \to \X\times \Y$ in \eqref{age} can be defined by
\begin{equation}\label{A}
\A(x, y, \tx, \ty) := \begin{bmatrix}
\nabla_xL(\tx, \ty) + \nabla^2_{xx}L(\tx, \ty)(x- \tx) + \nabla\Phi(\tx)^*(y - \ty)\\
-\Phi(\tx)  - \nabla\Phi(\tx)(x - \tx) + \sigma(\tx, \ty)(y - \ty)
\end{bmatrix}
\end{equation}
for a given pair $(\tx, \ty)\in \X\times \Y$ and $(x, y) \in \X\times \Y$, where $\sigma: \X\times\Y\to\R_+$ is appropriately chosen to provide a formula for the dual stabilization parameter for the next iterate in terms of the current one.
The next primal-dual iterate $(\xkk,\ykk)$, as a solution to \eqref{sSQP-KKT}, can now be chosen as a solution to the generalized equation
\begin{equation}\label{ge-A}
 (0,0)\in \A(x, y, \xk, \yk)+ G(x, y). 
\end{equation}
Set 
\begin{align*}
   (u, w)&:= \Psi(x, y) - \A(x, y, \tx, \ty)\\ 
   & = \Psi(x, y) - \Psi(\tx, \ty) - \nabla\Psi(\tx, \ty)\begin{bmatrix}
       x - \tx\\ y - \ty
   \end{bmatrix} + \begin{bmatrix}
       0\\ \sigma(\tx, \ty)(y-\ty)
   \end{bmatrix}.
\end{align*}
Differentiability of $\Psi$ then tells us that
\begin{equation*}
    \|(u, w)\| \leq \Big(\frac{o(\|(x-\tx, y - \ty)\|)}{\|(x-\tx, y - \ty)\|} + \sigma(\tx, \ty)\Big)\|(x-\tx, y - \ty)\|.
\end{equation*}
Thus, the estimate required in Theorem~\ref{thm:fischer}(c) is satisfied provided that $\|(x-\tx, y - \ty)\| \leq c\,\dist((\tx, \ty), \s(0, 0))$ and
\begin{equation*}
    \sigma(\tx, \ty) \leq \ell\,\dist((\tx, \ty), \s(0, 0)).
\end{equation*}
for some $\ell>0$.

\section{Solvability of Subproblems under SOSC}\label{sect:sosc}

Our main goal in this section is to demonstrate that the SOSC alone is sufficient for the solvability of the sSQP subproblems. 
To this end, we recall first some notation and error bound estimates that play an essential role in our analysis.

Given a point $\ox\in\R^n$, we define the set of {Lagrange multipliers} associated with  the KKT system \eqref{vs} at $\ox$ by
\begin{equation}\label{laset}
\Lambda(\ox):=\big\{y\in\R^m\;\big|\; 0\in \nabla_x L(\ox, y),\; y\in \sub g(\Phi(\ox))\big\}.
\end{equation}
If $(\ox,\oy)$ is a solution to the KKT  system \eqref{vs}, it is clear that   $\oy\in\Lambda(\ox)$.
Note that under a rather mild assumption such as the metric subregularity constraint qualification (MSCQ; see, \cite[Definition~3.2]{mms22})   the Lagrange multiplier set \eqref{laset} is nonempty when $\ox$ is a local minimizer of \eqref{comp}. Such a task requires a chain rule for the subdifferential of the composite function in \eqref{comp}, which can be found in \cite[Theorem~3.6]{mms22}. 
In this work, we do not focus on conditions ensuring the nonemptiness of $\Lm(\ox)$; rather, we always assume   that it is nonempty. 
Given  a solution $(\ox,\oy)$  to \eqref{vs}, the  SOSC for the composite problem \eqref{comp} at $(\ox,\oy)$ can be formulated (cf. see \cite[equation~(3.21)]{s20}) as  
\begin{equation}\label{sosc}
\begin{cases}
    \big\langle\nabla_{xx}^2L(\bar x,\oy)u,u\big\rangle+\d^2g (\Phi(\bar x),\oy) (\nabla \Phi(\ox)u )>0\\
    \textrm{for all } u \in \R^n\setminus\{0\} \textrm{ with } \nabla\Phi(\ox)u\in K_g \big(\Phi(\ox),\oy \big).
\end{cases}
\end{equation} 
 A fundamental special case of $g$ in \eqref{comp} arises when it is a polyhedral function. 
 For such $g$, \cite[Proposition~13.9]{rw} yields $\d^2g (\Phi(\ox),\oy)=\dd_{K_g (\Phi(\ox),\oy )}$, which allows the second subderivative term in \eqref{sosc} to be omitted.
 For instance, taking $g=\dd_{\R^{m_1}_-\times\{0\}^{m_2}}$ recovers a standard NLP with $m_1$ inequality and $m_2$ equality constraints. 
 In this setting, the SOSC in \eqref{sosc} reduces to the classical SOSC for NLPs.

For the composite problem \eqref{comp}, we define the  residual function $r:\R^n\times\R^m\to\R_+$ associated with the KKT system \eqref{vs} by
\begin{equation}\label{res}
r(x, y):= \|\nabla_x L(x, y)\|+\|\Phi(x)-{\rm prox}_g(\Phi(x)+ y)\|.
\end{equation}
As discussed below, this function provides an upper bound for the distance from nearby pairs $(x,y)$
to the set $\{\ox\}\times \Lm(\ox)$ under the SOSC.

\begin{Proposition}\label{prop:subr3}
   Assume that  $(\ox, \oy)$ is a solution to the KKT system \eqref{vs}
   and the SOSC in \eqref{sosc} holds. Then, there are positive constants $\tau_1\leq \tau_2$  such that 
\begin{equation}\label{subr3}
\tau_1\big(\|x-\ox\|+\dist(y,\Lambda(\ox))\big)\leq r(x, y)\leq \tau_2 \big(\|x-\ox\|+\dist(y,\Lambda(\ox))\big)
\end{equation}
for all $(x, y)$ in some neighborhood of $(\ox, \oy)$.
\end{Proposition}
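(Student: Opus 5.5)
The upper estimate is the easy half and I would do it first. Fix $(x,y)$ near $(\ox,\oy)$ and let $\hat y:=P_{\Lambda(\ox)}(y)$. Since $\Lambda(\ox)$ is closed and convex, $\hat y$ lies near $\oy$, and $(\ox,\hat y)$ solves \eqref{vs}, so $r(\ox,\hat y)=0$. Indeed, by \eqref{prm} with $\sigma=1$, $\hat y\in\sub g(\Phi(\ox))$ is equivalent to $\Phi(\ox)={\rm prox}_g(\Phi(\ox)+\hat y)$. The map $(x,y)\mapsto\nabla_xL(x,y)$ is locally Lipschitz, and ${\rm prox}_g$ is nonexpansive. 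Hence
\[
r(x,y)=r(x,y)-r(\ox,\hat y)\le \ell\big(\|x-\ox\|+\|y-\hat y\|\big),
\]
which gives $\tau_2$.

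For the lower estimate I would argue by contradiction, in the usual error-bound style. Suppose there are $(x^k,y^k)\to(\ox,\oy)$ with $r(x^k,y^k)=o(\rho_k)$, where $\rho_k:=\|x^k-\ox\|+\dist(y^k,\Lambda(\ox))>0$. Set $z^k:={\rm prox}_g(\Phi(x^k)+y^k)$ and $v^k:=\Phi(x^k)+y^k-z^k$. Then $v^k\in\sub g(z^k)$, and $z^k-\Phi(x^k)=o(\rho_k)$, so $v^k-y^k=o(\rho_k)$. I would first treat the case $x^k\neq\ox$ and normalize by $t_k:=\|x^k-\ox\|$, passing to a subsequence with $(x^k-\ox)/t_k\to u$ and $\|u\|=1$. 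The reduction lemma \eqref{rl2} is applied at $(\Phi(\ox),\hat y^k)$, where $\hat y^k:=P_{\Lambda(\ox)}(y^k)$. Since $\Lambda(\ox)$ is a polyhedral set, this would need uniformity, so I would instead use it at the fixed point $(\Phi(\ox),\oy)$. That gives $(z^k-\Phi(\ox),\,v^k-\oy)\in\gph D(\sub g)(\Phi(\ox),\oy)$. This graph is a finite union of polyhedral cones by \eqref{pwfor1}.

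Since $\gph \sub g$ is locally a finite union of polyhedra around $(\Phi(\ox),\oy)$, the whole KKT system becomes, near $(\ox,\oy)$, a system built from finitely many polyhedral pieces and smooth maps. My plan is therefore to work piece by piece. On a piece $P_j$ containing infinitely many $(z^k-\Phi(\ox),v^k-\oy)$, consider the linearized system
\[
\nabla_{xx}^2L(\ox,\oy)\xi+\nabla\Phi(\ox)^*\eta=0,\qquad (\nabla\Phi(\ox)\xi,\eta)\in P_j .
\]
Expanding $\nabla_xL(x^k,v^k)=o(\rho_k)$ gives
\[
\nabla^2_{xx}L(\ox,\oy)(x^k-\ox)+\nabla\Phi(\ox)^*(v^k-\oy)=o(\rho_k)+O(\|x^k-\ox\|\,\|v^k-\oy\|).
\]
I would then apply Hoffman's lemma to the polyhedral set of pairs $(\xi,\eta)$ defined by the multiplier conditions. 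This should show that the deviation of $v^k$ from $\Lambda(\ox)$ is controlled by $\|x^k-\ox\|$ plus $o(\rho_k)$. Consequently $t_k$ is comparable to $\rho_k$, which also settles the case $x^k=\ox$ eventually: then $\rho_k=o(\rho_k)$, a contradiction.

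The last step uses the second-order condition. Dividing by $t_k$ and passing to the limit, the vector $w:=\nabla\Phi(\ox)u$ lies in $K_g(\Phi(\ox),\oy)$, because it lies in $\dom D(\sub g)(\Phi(\ox),\oy)$. Moreover there is some $\eta$ with $\eta\in D(\sub g)(\Phi(\ox),\oy)(w)$ and $\nabla^2_{xx}L(\ox,\oy)u+\nabla\Phi(\ox)^*\eta=0$. Taking the inner product with $u$ and using \eqref{pwfor1}, which gives $\la\eta,w\ra=\d^2g(\Phi(\ox),\oy)(w)$, we obtain
\[
\la\nabla^2_{xx}L(\ox,\oy)u,u\ra+\d^2g(\Phi(\ox),\oy)(\nabla\Phi(\ox)u)=0 .
\]
This contradicts \eqref{sosc}.

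The main obstacle is the Hoffman step: controlling $\dist(v^k,\Lambda(\ox))$ by $\|x^k-\ox\|$ despite the nonlinear term $O(\|x^k-\ox\|\,\|v^k-\oy\|)$ and the switching between polyhedral pieces. To handle it, I would use that $\Lambda(\ox)$ near $\oy$ coincides with $\oy+\{\eta:\nabla\Phi(\ox)^*\eta=-\ldots,\ (0,\eta)\in P_j\}$ via \eqref{rl2} and \eqref{pwfor2}, so that Hoffman's lemma applies piecewise with finitely many constants.
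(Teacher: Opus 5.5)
Your upper estimate is correct; the paper takes it from \cite{HaS21}, and your local Lipschitz argument with $r(\ox,\hat y)=0$ is the natural proof. For the lower estimate you take a genuinely different, self-contained route. The paper only chains two results of \cite{s20}: SOSC implies noncriticality of $\oy$, and noncriticality yields the error bound. You instead re-derive this chain by a contradiction argument on the polyhedral pieces $P_j$ of $\gph D(\sub g)(\oz,\oy)$, with $\oz:=\Phi(\ox)$. Since you use SOSC only to exclude a nonzero solution of \eqref{crc}, a completed version of your argument actually proves the implication (a)$\Rightarrow$(b) of Theorem~\ref{chcri}.

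Your Hoffman step is sound, and the term you flag as the main obstacle is harmless: $O(\|x^k-\ox\|\,\|v^k-\oy\|)=o(\|x^k-\ox\|)$ because $v^k\to\oy$. Write $P_j=\{(a,\eta)\mid C_1a+C_2\eta\le 0\}$. By \eqref{pwfor2} and polyhedrality of $\sub g(\oz)$, the set $\{\eta\mid C_2\eta\le0,\ \nabla\Phi(\ox)^*\eta=0\}$ is contained in $\Lambda(\ox)-\oy$ near $0$. This is an inclusion rather than the equality you wrote, but it is all you need. Hoffman's lemma for this set then gives $\dist(v^k,\Lambda(\ox))=O(\|x^k-\ox\|)+o(\rho_k)$, and hence $\rho_k=O(t_k)$.

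The genuine gap is the last step. You cannot obtain $\eta$ by ``dividing by $t_k$ and passing to the limit'' in $(v^k-\oy)/t_k$, because this sequence need not be bounded. You control $\dist(y^k,\Lambda(\ox))$, not $\|y^k-\oy\|$, and the multipliers may drift along $\Lambda(\ox)$ with, say, $\|y^k-\oy\|\sim\sqrt{t_k}$. This is exactly the difficulty caused by nonunique multipliers. Subtracting the Hoffman projection $\lambda^k$ does not cure it, since $(a,\eta)\in P_j$ and $(0,\lambda)\in P_j$ do not give $(a,\eta-\lambda)\in P_j$. There are two ways to close the gap.

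\emph{Closedness of a polyhedral image.} This keeps your structure. The set $\{(a,\nabla\Phi(\ox)^*\eta)\mid (a,\eta)\in P_j\}$ is a linear image of a polyhedral cone, hence closed. It contains $\big((z^k-\oz)/t_k,\ \nabla\Phi(\ox)^*(v^k-\oy)/t_k\big)$, which converges to $\big(\nabla\Phi(\ox)u,\ -\nabla^2_{xx}L(\ox,\oy)u\big)$ because $\rho_k=O(t_k)$. The limit therefore lies in the set, which gives the required $\eta$.

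\emph{The device from Proposition~\ref{prop:eb}.} Let $\hat v^k:=P_{\Lambda(\ox)}(v^k)$. Since $z^k-\oz\in K_g(\oz,\oy)=N_{\sub g(\oz)}(\oy)$, you have $\la z^k-\oz,\hat v^k-\oy\ra\le 0$. Hence $\d^2g(\oz,\oy)(z^k-\oz)=\la v^k-\oy,z^k-\oz\ra\le\la v^k-\hat v^k,z^k-\oz\ra$. Now $\|v^k-\hat v^k\|=O(t_k)$ by your Hoffman step, and $\nabla\Phi(\ox)^*(\hat v^k-\oy)=0$. Passing to the limit and using lower semicontinuity of $\d^2g(\oz,\oy)$ gives $\la\nabla^2_{xx}L(\ox,\oy)u,u\ra+\d^2g(\oz,\oy)(\nabla\Phi(\ox)u)\le0$, which contradicts \eqref{sosc}.
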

\begin{proof}
The lower estimate comes from combining \cite[Theorem~3.6]{s20} and \cite[Propositions~3.8 \&~3.9]{s20}.
The upper estimate can be taken from \cite[Proposition~5.4]{HaS21} and it holds without requiring the SOSC.  
\end{proof}

Throughout the paper, we will fix a solution $(\ox, \oy)$ to the KKT system \eqref{vs} and consider an arbitrary function $\sigma:\R^n\times\R^m\to\R_+$ measuring the violation of optimality conditions in \eqref{vs}. Specifically, we assume the existence of a neighborhood $\mathcal{V}$ of $(\ox, \oy)$ and positive constants $\kappa_1\leq\kappa_2$ for which we have
\begin{equation}\label{eb}
\kappa_1\big(\|x-\ox\|+\dist(y,\Lambda(\ox))\big)\leq \sigma(x, y)\leq \kappa_2 \big(\|x-\ox\|+\dist(y,\Lambda(\ox))\big)\quad \textrm{for all }\; (x, y)\in \mathcal{V}.
\end{equation}
The upper estimate immediately implies that $\sigma(\ox, \oy)=0$ and $\sigma(x, y) \to 0$ as $(x, y)\to (\ox, \hat y)$ for some $\hat y \in \Lambda(\ox)$. According to Proposition~\ref{prop:subr3}, a natural choice for $\sigma$ is the residual function $r$ defined in \eqref{res}.

As stated at the beginning of this section, our goal is to establish the solvability of the sSQP subproblems under the SOSC in \eqref{sosc}.
Namely, given the current iterate $(x^k,y^k)$, we will show that the variational system in \eqref{sSQP-KKT} has a solution satisfying a certain local condition.
Our strategy is structured as follows: we first demonstrate that the auxiliary variational system 
$$
\begin{cases}
0= \nabla\ph(\xk)  + \nabla^2_{xx}L(\xk, \yk)(x-\xk) + \nabla \Phi(\xk)^*q,\\
q - \oy \in \sub\big(\tfrac{1}{2}\d^2g(\oz, \oy)\big)\big(\Phi(\xk) + \nabla\Phi(\xk)(x-\xk) - \sigma_k(y - \yk)-\oz\big)
\end{cases}
$$
admits a solution. 
This result is then combined with the reduction lemma for CPLQ functions in \eqref{rl2} to guarantee the solvability of \eqref{sSQP-KKT}. 
While the idea of employing an auxiliary variational system is borrowed from \cite{w98, ha98, FeS10} in the context of NLPs, our approach differs fundamentally in using the  reduction lemma to transition from the auxiliary variational system to \eqref{sSQP-KKT}. 
We start with the following result, which demonstrates that a perturbed variant of the SOSC in \eqref{sosc} holds in a neighborhood of the reference KKT point $(\ox,\oy)$, thus generalizing a similar result for NLPs in \cite[Proposition~1]{FeS10}.

\begin{Proposition}\label{ex_sosc}
Let $(\ox, \oy)$ be a solution to the KKT system \eqref{vs} such that the SOSC \eqref{sosc} holds at $(\ox, \oy)$ and $\sigma$ satisfy the upper bound in \eqref{eb}. Set $\oz :=\Phi(\ox)$. Then there exist positive constants $\varepsilon$ and $\gamma$ such that for all $(x, y) \in \B_\varepsilon(\ox, \oy))$ we have
\begin{equation}\label{s7}
\langle \nabla^2_{xx}L(x, y)u, u\rangle+\d^2g(\oz, \oy)\big(\nabla\Phi(x)u-\sigma(x, y)w\big)+\sigma(x, y)\|w\|^2 \geq \gamma(\|u\|^2+\sigma(x, y)\|w\|^2)
\end{equation}
for all $(u, w) \in \K(x, y):= \big\{(u, w)~\big|~ \nabla\Phi(x)u-\sigma(x, y)w\in K_g(\oz, \oy)\big\}$.
\end{Proposition}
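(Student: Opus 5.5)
We argue by contradiction, exploiting the finite polyhedral structure of $K_g(\oz,\oy)$ and the positive homogeneity of degree two of every term in \eqref{s7}.

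\textbf{Setup.} By \eqref{domcr}, $K_g(\oz,\oy)=\bigcup_{i\in I(\oz)}K_i$ with $K_i:=K_{C_i}(\oz,\oy_i)$. Each $K_i$ is a closed polyhedral cone. Moreover, by the formula for $\d^2g(\oz,\oy)$ taken from \cite[Proposition~13.9]{rw}, we have $\d^2g(\oz,\oy)(v)=\la A_iv,v\ra$ for every $v\in K_i$. This value is well defined on overlaps of the cones and is continuous on each $K_i$.

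Since both sides of \eqref{s7} are positively homogeneous of degree two in $(u,w)$, and $\K(x,y)$ is a cone, it suffices to prove \eqref{s7} on the normalized set $\|u\|^2+\sigma(x,y)\|w\|^2=1$. If $\sigma(x,y)=0$ the $w$-terms vanish and we simply normalize $\|u\|=1$; the argument below covers this case as well.

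\textbf{Contradiction sequences.} Suppose no $\varepsilon,\gamma$ work. Then there are $(x^k,y^k)\to(\ox,\oy)$ and $(u^k,w^k)\in\K(x^k,y^k)$ with $\|u^k\|^2+\sigma_k\|w^k\|^2=1$, where $\sigma_k:=\sigma(x^k,y^k)$, such that
\[
\la\nabla^2_{xx}L(x^k,y^k)u^k,u^k\ra+\d^2g(\oz,\oy)(v^k)+\sigma_k\|w^k\|^2<\tfrac1k,
\qquad v^k:=\nabla\Phi(x^k)u^k-\sigma_kw^k\in K_g(\oz,\oy).
\]
The upper bound in \eqref{eb}, together with $\oy\in\Lambda(\ox)$, gives $\sigma_k\le\kappa_2\|(x^k-\ox,y^k-\oy)\|\to0$. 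Since $I(\oz)$ is finite, passing to a subsequence we may assume $v^k\in K_i$ for one fixed index $i$, $u^k\to u$, and $\sigma_k\|w^k\|^2\to s\in[0,1]$. The key observation is
\[
\|\sigma_kw^k\|=\sqrt{\sigma_k}\,\big(\sqrt{\sigma_k}\|w^k\|\big)\le\sqrt{\sigma_k}\to0 .
\]
Hence $v^k\to\nabla\Phi(\ox)u$. By closedness of $K_i$ we get $\nabla\Phi(\ox)u\in K_i\subset K_g(\oz,\oy)$. By the quadratic representation on $K_i$, $\d^2g(\oz,\oy)(v^k)=\la A_iv^k,v^k\ra\to\d^2g(\oz,\oy)(\nabla\Phi(\ox)u)$.

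\textbf{Passing to the limit.} Letting $k\to\infty$ and using continuity of $\nabla^2_{xx}L$ yields
\[
\la\nabla^2_{xx}L(\ox,\oy)u,u\ra+\d^2g(\oz,\oy)(\nabla\Phi(\ox)u)+s\le0 .
\]
We now distinguish two cases.
\begin{enumerate}[noitemsep,topsep=2pt]
\item If $u\neq0$, then since $s\ge0$ this contradicts the SOSC \eqref{sosc}, because $\nabla\Phi(\ox)u\in K_g(\oz,\oy)$.
\item If $u=0$, the normalization forces $s=1$. The first two terms tend to $0$, since $v^k\to0$ and $\d^2g(\oz,\oy)(0)=0$. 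This leaves $1\le0$, which is absurd.
\end{enumerate}
Either way we reach a contradiction, which yields the desired $\varepsilon$ and $\gamma$.

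\textbf{Main obstacle.} The delicate point is controlling the term $\d^2g(\oz,\oy)$ along the perturbed vectors $v^k$, because $\d^2g(\oz,\oy)$ is not continuous on all of $\Y$. This is handled by two facts:
\begin{enumerate}[noitemsep,topsep=2pt]
\item the finite decomposition of $K_g(\oz,\oy)$ into the closed cones $K_i$, on each of which $\d^2g(\oz,\oy)$ is a continuous quadratic form;
\item the estimate $\sigma_k w^k\to0$, which follows only from the normalization $\sigma_k\|w^k\|^2\le1$ together with $\sigma_k\to0$.
\end{enumerate}
Notably, only the upper bound in \eqref{eb} is used.
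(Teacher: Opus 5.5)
Your proof is correct and follows the paper's argument: contradiction sequences normalized by $\|u^k\|^2+\sigma_k\|w^k\|^2$, the observation that $\sigma_k w^k\to 0$, passage to the limit, and then the SOSC to force $u=0$ and a contradiction with the normalization (your scalar $s$ plays the role of the paper's $\|w\|^2$). The only difference is in justifying the limit of $\d^2g(\oz,\oy)(v^k)$: you fix one piece $K_i$ and use its quadratic form, whereas the paper invokes continuity of the convex, degree-two homogeneous function $\d^2g(\oz,\oy)$ relative to its polyhedral domain, so your version is slightly more self-contained.
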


\begin{proof}
Suppose by contradiction that there exist sequences $(\xk, \yk) \to (\ox, \oy)$ and $(\uk, \wk)\in \K(\xk, \yk)$ satisfying
\begin{equation}\label{s1}
\langle \Hk\uk, \uk\rangle+\d^2g(\oz, \oy)\big(\nabla\Phi(\xk)\uk-\sigk\wk\big)+\sigk\|\wk\|^2 < \frac{1}{k}(\|\uk\|^2+\sigk\|\wk\|^2),
\end{equation}
with $\Hk:= \nabla^2_{xx}L(\xk, \yk)$ and $\sigk:= \sigma(\xk, \yk)$. Set $\beta_k:=\sqrt{\|\uk\|^2+\sigk\|\wk\|^2}$. The latter estimate obviously yields $\beta_k>0$. Passing to a subsequence, if necessary, we can assume that
\begin{equation*}
\Big(\frac{\uk}{\beta_k}, \frac{\sqrt{\sigk}\wk}{\beta_k}\Big)\to (u, w)\;\;\textrm{as }\, k\to \infty\;\;\textrm{for some }\, (u, w)\neq 0.
\end{equation*}
Recall that $\nabla\Phi(\xk)\uk-\sigk\wk \in K_g(\oz, \oy)$ for all $k\in \N$. Dividing the left-hand side of the latter by $\beta_k>0$ and passing through the limit as $k\to\infty$, we get  $\nabla\Phi(\ox)u\in K_g(\oz, \oy)$, due to $\sigk\to 0$ as $k\to\infty$. 
It is worth mentioning that $\d^2g(\oz, \oy)$ is homogeneous of order 2 and convex (cf. \cite[Propositions~13.5 \& 13.20(a)]{rw}. It is therefore continuous relative to its domain $\dom \d^2g(\oz, \oy) = K_g(\oz, \oy)$. Dividing both sides of \eqref{s1} by $\beta_k^2$ gives us
\begin{align*}
\Big\langle \Hk\frac{\uk}{\beta_k}, \frac{\uk}{\beta_k}\Big\rangle +\d^2g(\oz, \oy)\Big(\nabla\Phi(\xk)\frac{\uk}{\beta_k}-\frac{\sigk\wk}{\beta_k}\Big)+\frac{\sigk\|\wk\|^2}{\beta_k^2}
< \frac{1}{k},
\end{align*}
Passing through the limit as $k\to\infty$ then yields
\begin{equation*}
\langle\nabla_{xx}^2L(\bar x,\oy)u,u\rangle+\d^2g (\oz,\oy) (\nabla \Phi(\ox)u)+\|w\|^2\leq 0.
\end{equation*}
The SOSC \eqref{sosc} then implies that $u = 0$, and therefore $w = 0$, a contradiction. The proof is then complete.
\end{proof}

As a direct consequence of Proposition~\ref{ex_sosc}, we establish below the nonsingularity of certain matrices arising in the variational system \eqref{sSQP-KKT}. 

\begin{Corollary}\label{cor:SOSC}
Let $(\ox, \oy)$ be a solution to the KKT system \eqref{vs} such that the SOSC \eqref{sosc} holds at $(\ox, \oy)$ and $\sigma$ satisfy the upper bound in \eqref{eb}. Then there exists $\varepsilon>0$ such that for all $(x, y) \in \B_\varepsilon(\ox, \oy)$ satisfying $\sigma(x, y)>0$ the matrix
\begin{equation}\label{s2}
\begin{bmatrix}
\nabla^2_{xx} L(x, y)&\nabla\Phi(x)^*\\
-\nabla\Phi(x)&\sigma(x, y)I
\end{bmatrix}
\end{equation}
is nonsingular.
\end{Corollary}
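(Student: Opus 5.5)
We take $\varepsilon>0$ from Proposition~\ref{ex_sosc}, fix $(x,y)\in\B_\varepsilon(\ox,\oy)$ with $\sigma:=\sigma(x,y)>0$, and show that the matrix \eqref{s2} has trivial kernel. So suppose $(u,w)\in\X\times\Y$ satisfies
\begin{equation*}
\nabla^2_{xx}L(x,y)u+\nabla\Phi(x)^*w=0,\qquad -\nabla\Phi(x)u+\sigma w=0 .
\end{equation*}
The second equation gives $\nabla\Phi(x)u-\sigma w=0$. Since $0$ belongs to the cone $K_g(\oz,\oy)$, we have $(u,w)\in\K(x,y)$, so estimate \eqref{s7} applies to $(u,w)$.

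Next we evaluate the left-hand side of \eqref{s7}. The second subderivative $\d^2g(\oz,\oy)$ is positively homogeneous of degree two and finite on the critical cone, so $\d^2g(\oz,\oy)(0)=0$; this is also clear from the piecewise formula for $\d^2g$. Taking the inner product of the first equation with $u$ gives
\begin{equation*}
\langle\nabla^2_{xx}L(x,y)u,u\rangle=-\langle w,\nabla\Phi(x)u\rangle=-\sigma\|w\|^2,
\end{equation*}
where the last equality uses $\nabla\Phi(x)u=\sigma w$. Hence the left-hand side of \eqref{s7} equals $-\sigma\|w\|^2+0+\sigma\|w\|^2=0$.

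Estimate \eqref{s7} then yields $0\ge\gamma(\|u\|^2+\sigma\|w\|^2)$. Because $\gamma>0$ and $\sigma>0$, this forces $u=0$ and $w=0$. The kernel is trivial, so the square matrix \eqref{s2} is nonsingular.

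There is no real obstacle here. The only points to check are that $\d^2g(\oz,\oy)(0)=0$ and that the sign convention in $\K(x,y)$ matches the second block row of \eqref{s2}. We also need $\sigma>0$ strictly, since otherwise \eqref{s7} would not force $w=0$.
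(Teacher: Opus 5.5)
Your proposal is correct and follows the paper's proof essentially step for step. You take $(u,w)$ in the kernel and note that $\nabla\Phi(x)u-\sigma(x,y)w=0\in K_g(\oz,\oy)$, so $(u,w)\in\K(x,y)$ with vanishing second-subderivative term; the identity $\langle\nabla^2_{xx}L(x,y)u,u\rangle=-\sigma(x,y)\|w\|^2$ then makes the left side of \eqref{s7} zero, and $\gamma>0$, $\sigma(x,y)>0$ force $(u,w)=0$.
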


\begin{proof}
Let $\varepsilon>0$ and $\gamma>0$ be such that \eqref{s7} holds for all $(x, y)\in \B_\varepsilon(\ox, \oy)$ and all $(u, w)\in \K(x, y)$. 
Fix $(x, y) \in \B_\varepsilon(\ox,\oy)$ with $\sigma(x, y)>0$ and let $(u, w)$ be in the kernel of the matrix defined in \eqref{s2}. That means
\begin{equation}\label{s4}
\begin{cases}
\nabla^2_{xx} L(x, y)u+\nabla\Phi(x)^*w = 0,\\
\nabla\Phi(x)u - \sigma(x, y)w = 0.
\end{cases}
\end{equation}
Multiplying both sides of the first equality by $u$ and taking the second one into account, we obtain
\begin{equation}\label{s3}
0 = \langle \nabla^2_{xx} L(x, y)u, u\rangle+\langle\nabla\Phi(x)u, w\rangle =  \langle \nabla^2_{xx} L(x, y)u, u\rangle+\sigma(x, y)\|w\|^2
\end{equation}
Moreover, the second equality of \eqref{s4} particularly implies $\d^2g(\Phi(\ox), \oy)(\nabla\Phi(x)u-\sigma(x, y)w)=0$ and $(u, w)\in \K(x, y)$. Thus, combining \eqref{s7} and \eqref{s3}, we get
\begin{equation*}
\|u\|^2+\sigma(x, y)\|w\|^2=0,
\end{equation*}
which yields $(u, w)=(0,0)$, since $\sigma(x, y)>0$, and therefore the matrix in \eqref{s2} is nonsingular. The proof is then complete.
\end{proof}

We continue with another observation that guarantees the existence of a solution to the  variational system \eqref{sSQP-KKT}. 
Specifically, the perturbed problem \eqref{sSQP-1} is deliberately designed to ensure that the auxiliary variational system \eqref{au-KKT} has a solution.

\def\hu{\hat{u}}
\def\hw{\hat{w}}
\begin{Proposition}\label{prop:existence}
Let $(\ox, \oy)$ be a solution to the KKT system \eqref{vs} such that the SOSC \eqref{sosc} holds at $(\ox, \oy)$ and $\sigma$ satisfy the upper bound in \eqref{eb}. Let $\oz := \Phi(\ox)$.
Then there exist $\varepsilon>0$ such that for all $(x, y) \in \B_\varepsilon(\ox,\oy)$ with $\sigma(x, y)>0$, the following auxiliary optimization problem
\begin{align}\label{sSQP-1}
    \mini_{(s, q) \in \X\times \Y} &\;\la\nabla \ph(x),s-x\ra + \tfrac{1}{2}\la\nabla^2_{xx}L(x, y)(s - x), s - x\ra \nonumber\\
    &\; +\la \oy, \Phi(x) + \nabla\Phi(x)(s - x) - \sigma(x, y)(q - y) -\oz\ra\\
    &\; + \tfrac{1}{2}\d^2g(\oz, \oy)\big(\Phi(x) + \nabla\Phi(x)(s - x) - \sigma(x, y)(q - y) - \oz\big) + \tfrac{1}{2}\sigma(x, y)\|q\|^2\nonumber
\end{align}
has a nonempty compact solution set.
\end{Proposition}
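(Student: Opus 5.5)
Our plan is to show that the objective of \eqref{sSQP-1}, call it $\theta(s,q)$, is a proper lower semicontinuous function that is coercive on $\X\times\Y$. Existence of minimizers and compactness of the solution set then follow by the standard Weierstrass argument: the level sets are closed and bounded, and the solution set is one nonempty level set. Fix $(x,y)\in\B_\varepsilon(\ox,\oy)$ with $\sigma:=\sigma(x,y)>0$, where $\varepsilon,\gamma$ come from Proposition~\ref{ex_sosc}. Substitute $u:=s-x$ and $w:=q-y$, and set
\[
d:=\Phi(x)-\oz+\nabla\Phi(x)u-\sigma w .
\]
Then $\theta$ is a quadratic function of $(u,w)$ plus $\tfrac12\d^2g(\oz,\oy)(d)$. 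The term $\d^2g(\oz,\oy)$ is convex, lsc, positively homogeneous of degree two, and finite exactly on $K_g(\oz,\oy)$. Hence $\theta$ is lsc. It is also proper: since $\sigma>0$, we may take $u=0$ and choose $w$ so that $d=0\in K_g(\oz,\oy)$.

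\textbf{Coercivity.} We argue by contradiction. Suppose there are $(u^k,w^k)$ with $\beta_k:=\|(u^k,w^k)\|\to\infty$ and $\theta\le M$ along the sequence. Normalize to $(u^k,w^k)/\beta_k\to(u,w)\neq 0$. The feasibility $d^k\in K_g(\oz,\oy)$ holds since $\theta<\infty$. Because $K_g(\oz,\oy)$ is a closed cone (a finite union of polyhedral cones by \eqref{domcr}), dividing by $\beta_k$ and passing to the limit gives $\nabla\Phi(x)u-\sigma w\in K_g(\oz,\oy)$, that is, $(u,w)\in\K(x,y)$.

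Next, divide $\theta\le M$ by $\beta_k^2$. The linear terms and the constant shift $\Phi(x)-\oz$ disappear in the limit. For the second subderivative, lower semicontinuity together with degree-two homogeneity gives
\[
\liminf_k \d^2g(\oz,\oy)(d^k)/\beta_k^2\ \ge\ \d^2g(\oz,\oy)\big(\nabla\Phi(x)u-\sigma w\big).
\]
The term $\tfrac12\sigma\|q\|^2=\tfrac12\sigma\|y+w\|^2$ contributes $\tfrac12\sigma\|w\|^2$ in the limit. We therefore obtain
\[
\tfrac12\Big(\la\nabla^2_{xx}L(x,y)u,u\ra+\d^2g(\oz,\oy)(\nabla\Phi(x)u-\sigma w)+\sigma\|w\|^2\Big)\le 0 .
\]
By \eqref{s7} the left-hand side is at least $\tfrac{\gamma}{2}(\|u\|^2+\sigma\|w\|^2)$, which is positive since $\sigma>0$ and $(u,w)\neq0$. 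This is a contradiction, so $\theta$ is coercive.

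\textbf{Expected obstacle.} The delicate step is the limit of the second-subderivative term. The shift $\Phi(x)-\oz$ is fixed and nonzero, and $\d^2g(\oz,\oy)$ is only lsc, not continuous on all of $\Y$. This is why we rely on lower semicontinuity, which is enough because we only need a lower bound, together with homogeneity, rather than continuity. If needed, one can use instead the explicit piecewise form $\la A_i d,d\ra$ on the polyhedral pieces $K_{C_i}(\oz,\oy_i)$: pass to a subsequence on which $d^k$ stays in a single piece, and the limit is then exact.
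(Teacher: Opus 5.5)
Your proof is correct, but it takes a different route from the paper's.

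The paper first uses Corollary~\ref{cor:SOSC} (nonsingularity of the matrix in \eqref{s2}) to solve a linear system for a shift $(\hat u,\hat w)$. After the change of variables $u=s-x-\hat u$, $w=q-y-\hat w$, the constant offset $\Phi(x)-\oz$ drops out of the argument of $\d^2g(\oz,\oy)$. The objective then becomes exactly the quadratic form of \eqref{s7} minus a linear functional of $\nabla\Phi(x)u-\sigma(x,y)w$; see \eqref{s6}. Coercivity is then a direct estimate: \eqref{s7} plus Cauchy--Schwarz gives a lower bound $\tfrac{\gamma}{2}\rho^2-C\rho$ with $\rho=\sqrt{\|u\|^2+\sigma(x,y)\|w\|^2}$, and Weierstrass is applied on a large ball.

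You instead leave the offset inside $\d^2g(\oz,\oy)$ and prove boundedness of level sets by a horizon-direction argument. You normalize an unbounded sequence, and closedness of the cone $K_g(\oz,\oy)$ puts the limit direction in $\K(x,y)$. Degree-two homogeneity together with lower semicontinuity of $\d^2g(\oz,\oy)$ then makes the quadratic form of \eqref{s7} nonpositive there, contradicting \eqref{s7} since $\sigma(x,y)>0$.

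What your route buys: it dispenses with Corollary~\ref{cor:SOSC} and the algebraic reformulation, handling the offset and all lower-order terms in one limiting argument. The price is that it is non-quantitative, and it uses the homogeneity and the closed conic domain of $\d^2g(\oz,\oy)$ explicitly. The paper's reduction, by contrast, yields an explicit growth bound (hence an explicit ball containing the solution set) and the transparent form \eqref{s6}.

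Your handling of the one delicate point is exactly right: only a lower bound on $\liminf_k \d^2g(\oz,\oy)(d^k)/\beta_k^2$ is needed, so lower semicontinuity suffices despite the fixed nonzero shift.
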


\begin{proof}
Let $\varepsilon>0$ and $\gamma>0$ be such that \eqref{s7} holds for all $(x, y)\in \B_\varepsilon(\ox, \oy)$ and all $(u, w)\in \K(x, y)$. 
Fix $(x, y)\in\B_\varepsilon(\ox, \oy)$ with $\sigma(x, y) > 0$ and let $(\hu, \hw)$ be the unique solution to the linear system
\begin{equation*}
\begin{bmatrix}
\nabla^2_{xx} L(x, y)&\nabla\Phi(x)^*\\
-\nabla\Phi(x)&\sigma(x, y) I
\end{bmatrix}
\begin{bmatrix} u\\ w\end{bmatrix}
=\begin{bmatrix}
-\nabla_x L(x, y)\\ \Phi(x)-\oz
\end{bmatrix},
\end{equation*}
which exists due to nonsingularity of the matrix on the left-hand side, established in Corollary \ref{cor:SOSC}. Define $(u, w):= (s-x-\hu, q-y-\hw)$ and observe that
\begin{equation*}
\Phi(x)+\nabla\Phi(x)(s-x)-\sigma(x, y)(q-y) - \oz
= \nabla\Phi(x)u-\sigma(x, y) w
\end{equation*}
and
\begin{align*}
   &\langle \nabla\varphi(x), s-x\rangle +\tfrac{1}{2}\langle \nabla^2_{xx} L(x, y)(s-x), s-x\rangle + \tfrac{1}{2}\sigma(x, y)\|q\|^2\\
      &=  \tfrac{1}{2}\la \nabla^2_{xx} L(x, y)u, u\ra + \tfrac{1}{2}\sigma(x, y)\|w\|^2 -  \la y + \hw, \nabla\Phi(x)u - \sigma(x, y) w\ra + c,
\end{align*}
with $c = c (x, y, \hu, \hw)$ being independent of $(s, q)$.
Problem \eqref{sSQP-1} therefore can be reformulated, using variable $(u, w)$, as follows
 \begin{align}\label{s6}
\mini_{(u,w) \in \X\times \Y}&\; \tfrac{1}{2}\Big[\langle \nabla^2_{xx} L(x, y)u, u\rangle+\d^2g(\oz, \oy)(\nabla\Phi(x)u-\sigma(x, y) w)+\sigma(x, y)\|w\|^2\Big]\nonumber\\
& - \langle y + \hw - \oy, \nabla\Phi(x)u -\sigma(x, y) w\rangle.
\end{align}
Let $\ell(u, w)$ denote the objective function in \eqref{s6}.
Employing the estimate \eqref{s7}, we get
\begin{equation}\label{s14}
\ell(u, w)\geq \frac{\gamma}{2}(\|u\|^2+\sigma(x, y)\|w\|^2) - \langle y + \hw - \oy, \nabla\Phi(x)u -\sigma(x, y) w\rangle
\end{equation}
for all $(u, w)\in \K(x, y)$. This inequality is evidently true for $(u, w)\notin \K(x, y)$. It follows from  the Cauchy-Schwarz inequality that
\begin{align*}
- \langle y + \hw - \oy, \nabla\Phi(x)u &-\sigma(x, y) w\rangle  = - \langle \nabla\Phi(x)^*(y + \hw - \oy), u\ra +\sigma(x, y)\la y + \hw - \oy,  w\rangle\\
 \geq &- \|y + \hw - \oy\|\sqrt{\| \nabla\Phi(x)\|^2+\sigma(x, y)}\cdot\sqrt{\|u\|^2+\sigma(x, y)\|w\|^2},
\end{align*}
or, equivalently,
\begin{equation*}
-\frac{\langle y + \hw - \oy, \nabla\Phi(x)u -\sigma(x, y) w\rangle}{\|u\|^2+\sigma(x, y)\|w\|^2}\geq -\frac{ \|y + \hw - \oy\|\sqrt{\| \nabla\Phi(x)\|^2+\sigma(x, y)}}{\sqrt{\|u\|^2+\sigma(x, y)\|w\|^2}}.
\end{equation*}
Observe that the right-hand side goes to $0$ as $\|(u, w)\|\to \infty$. Thus, for $(u, w)$ with $\|(u, w)\|$ sufficiently large, we have
\begin{equation*}
 - \langle y + \hw - \oy, \nabla\Phi(x)u -\sigma(x, y) w\rangle \geq -\frac{\gamma}{4}(\|u\|^2+\sigma(x, y)\|w\|^2).
\end{equation*}
This, together with \eqref{s14}, proves coercivity of the objective function $\ell(u, w)$. 
We can then find a large positive constant $M$ such that solving \eqref{s6} is equivalent to minimizing $\ell(u, w)$ over the ball $M\B\subset\X\times\Y$. The just described optimization problem obviously has nonempty and compact solution set, due to the classical Weierstrass theorem, and so does the problem \eqref{sSQP-1}. The proof is then complete.
\end{proof}

We now derive the solvability of an auxiliary variational system closely related to \eqref{sSQP-KKT}. Note that its proof heavily relies on Proposition~\ref{prop:existence} and the chain rule \eqref{sub_cplq} for composite CPLQ functions. 

\begin{Proposition}\label{prop:solv}
Let $(\ox, \oy)$ be a solution to the KKT system \eqref{vs} such that the SOSC \eqref{sosc} holds at $(\ox, \oy)$ and let $\sigma$ satisfy the upper bound in \eqref{eb}. Then, there exists a positive constant $\varepsilon$   such that for all $(x, y)\in \B_\varepsilon(\ox, \oy)$ with $\sigma(x, y)>0$,  the solution set to the variational system
\begin{equation}\label{au-KKT}
\begin{cases}
\quad \;\;0 = \nabla\varphi(x)+\nabla^2_{xx}L(x, y)(s-x) + \nabla\Phi(x)^*q,\\
q - \oy \in \sub\big(\tfrac{1}{2}\d^2g(\oz, \oy)\big)\big(\Phi(x)+\nabla\Phi(x)(s-x)-\sigma(x, y)(q-y)-\oz\big).
\end{cases}
\end{equation}
is always nonempty.
\end{Proposition}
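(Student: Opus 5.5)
The plan is to take a minimizer $(s,q)$ of the auxiliary problem \eqref{sSQP-1}, which exists by Proposition~\ref{prop:existence} for $(x,y)\in\B_\varepsilon(\ox,\oy)$ with $\sigma(x,y)>0$, and show that its first-order optimality condition is exactly the system \eqref{au-KKT}. Fix such $(x,y)$ and write $\sigma:=\sigma(x,y)$. Let $B:\X\times\Y\to\Y$ be the linear map $B(s,q):=\nabla\Phi(x)s-\sigma q$, and set $b:=\Phi(x)-\nabla\Phi(x)x+\sigma y-\oz$. Then the argument of $\d^2g(\oz,\oy)$ and of the linear term in \eqref{sSQP-1} is $B(s,q)+b$.

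The objective splits as $f_0(s,q)+h(s,q)$. Here $f_0$ is the smooth part,
\[
f_0(s,q):=\la\nabla\ph(x),s-x\ra+\tfrac12\la\nabla^2_{xx}L(x,y)(s-x),s-x\ra+\la\oy,B(s,q)+b\ra+\tfrac12\sigma\|q\|^2 ,
\]
and the remaining part is $h(s,q):=\tfrac12\d^2g(\oz,\oy)(B(s,q)+b)$. Since $\d^2g(\oz,\oy)$ is convex, proper and lsc, $h$ is convex. I will use the generalized Fermat rule $0\in\nabla f_0(s,q)+\sub h(s,q)$, valid at a local minimizer of a smooth-plus-convex function, for instance via \cite[Exercise~8.8 \& Theorem~10.1]{rw}.

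To compute $\sub h$, I will absorb the shift $b$ by writing $\d^2g(\oz,\oy)(Bv+b)$ as the composition of $\d^2g(\oz,\oy)$ with the affine map $v\mapsto Bv+b$. The chain rule \eqref{sub_cplq} from \cite[Corollary~3.8]{mms22} then gives
\[
\sub h(s,q)=B^*\sub\big(\tfrac12\d^2g(\oz,\oy)\big)\big(B(s,q)+b\big),
\]
with no constraint qualification needed. The only care is to check that the cited chain rule covers affine rather than linear inner maps. If it does not, I would reduce to the linear case by adding a variable $t$ and using the linear map $(s,q,t)\mapsto Bv+tb$ restricted to $t=1$. Alternatively, since $\d^2g(\oz,\oy)$ is piecewise linear-quadratic and $\dom h$ is polyhedral, the polyhedral chain rule \cite[Theorem~23.9]{rw} style argument also applies.

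The Fermat rule then yields some $\eta\in\sub(\tfrac12\d^2g(\oz,\oy))(B(s,q)+b)$ with
\begin{align*}
0&=\nabla\ph(x)+\nabla^2_{xx}L(x,y)(s-x)+\nabla\Phi(x)^*(\oy+\eta),\\
0&=\sigma q-\sigma(\oy+\eta).
\end{align*}
Since $\sigma>0$, the second equation gives $q=\oy+\eta$, that is, $q-\oy=\eta$. Substituting this into the first equation and the inclusion gives exactly \eqref{au-KKT}, so $(s,q)$ is a solution.

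The main obstacle is the subdifferential chain rule for the composite term without any qualification condition. This is where the piecewise linear-quadratic structure, via \eqref{sub_cplq}, is essential. The rest is bookkeeping of the gradients.
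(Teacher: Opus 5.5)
Your proposal is correct and follows the paper's proof essentially step for step. You take a minimizer $(s,q)$ of \eqref{sSQP-1} from Proposition~\ref{prop:existence}, write its first-order condition via the chain rule \eqref{sub_cplq}, and use $\sigma(x,y)>0$ in the $q$-component to identify the multiplier with $q-\oy$.

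Your concern about the affine (rather than linear) inner map is harmless, and no special CPLQ chain rule is needed at this step. For $\sigma(x,y)>0$ the linear map $(s,q)\mapsto\nabla\Phi(x)s-\sigma(x,y)q$ is surjective, so the standard convex chain rule applies automatically. Note also that the ``Theorem~23.9'' you cite is from Rockafellar's \emph{Convex Analysis}, not from Rockafellar--Wets.
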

\begin{proof} Take the positive constant $\ve$ from  Proposition~\ref{prop:existence} and let $(x, y)\in \B_\varepsilon(\ox, \oy)$ with $\sigma(x, y)>0$. 
According to Proposition~\ref{prop:existence}, the auxiliary optimization problem \eqref{sSQP-1}  has a solution, says $(s, q)$. Thus, the first-order optimality condition   for \eqref{sSQP-1} can be expressed via the chain rule in \eqref{sub_cplq} as 
\begin{equation*}
\begin{cases}
   0 = \nabla\varphi(x)+\nabla^2_{xx}L(x, y)(s-x) + \nabla\Phi(x)^*(\oy +\nu),\\
   0 = \sigma(x, y)(q - \oy) - \sigma(x, y) \nu,\\ 
    \nu \in \sub\big(\tfrac{1}{2}\d^2g(\oz, \oy)\big)\big(\Phi(x)+\nabla\Phi(x)(s-x)-\sigma(x, y)(q-y)-\oz\big),
\end{cases}
\end{equation*}
The second equality and $\sigma(x, y)>0$  yield $\nu = q - \oy$, which in turn tells us that $(s, q)$ indeed solves the  variational system \eqref{au-KKT} and thus completes the proof.
\end{proof}

A similar result was established in \cite[Proposition~2]{FeS10} for NLPs under the classical SOSC. 
The approach therein  relies heavily on the existence of solutions for a class of generalized equations from \cite[Theorem~2.5.10]{fp}. 
Our proof of Proposition~\ref{prop:solv}, however, departs significantly from this approach and employs the fact that the optimization problem \eqref{sSQP-1} always admits a solution.  

To establish the solvability of the KKT systems associated with the sSQP subproblems under the hypothesis of Proposition~\ref{prop:solv}, it suffices to show, for any $(x, y)\in \B_\varepsilon(\ox, \oy)$ with $\sigma(x, y)>0$, that the variational system 
\begin{equation}\label{ssqp-kkt}
\begin{cases}
        0= \nabla\ph(x) + \nabla^2_{xx}L(x, y)(s-x)+ \nabla \Phi(x)^*q ,\\
         q\in \sub g\big(\Phi(x) + \nabla\Phi(x)(s-x) - \sigma(x, y)(q-y)\big),
    \end{cases}
\end{equation}
admits a solution $(s,q)$. 
This task can be accomplished by showing that any solution $(s,q)$ to \eqref{au-KKT} remains  sufficiently close to the reference KKT point $(\ox,\oy)$.
Indeed, if this holds, combining the reduction lemma for CPLQ functions in \eqref{rl2} with the first equality in \eqref{pwfor1} yields the solvability of \eqref{ssqp-kkt}. 
The next result establishes an error bound estimate for the solution to \eqref{au-KKT}, guaranteeing that any solution stays within a target neighborhood of $(\ox,\oy)$.

\begin{Proposition}\label{prop:eb}
Let $(\ox, \oy)$ be a solution to the KKT system \eqref{vs} such that the SOSC \eqref{sosc} holds at $(\ox, \oy)$ and $\sigma$ satisfy both error bound estimates in \eqref{eb}. Then there exist positive constants $\varepsilon$ and $\kappa$ such that for all $(x, y)\in \B_\varepsilon(\ox, \oy)$ with $\sigma(x, y)>0$, the error bound estimate
\begin{equation}\label{eb3}
\|s-x\|+\|q-y\|\leq \kappa\sigma(x, y)
\end{equation}
holds for any solution $(s, q)$ to the system \eqref{au-KKT}.
\end{Proposition}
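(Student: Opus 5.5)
We argue by contradiction, in the spirit of \cite[Proposition~3]{FeS10} and the ALM analysis in \cite{HaS21}. Suppose there are $(\xk,\yk)\to(\ox,\oy)$ with $\sigk:=\sigma(\xk,\yk)>0$, and solutions $(s^k,q^k)$ of \eqref{au-KKT} at $(\xk,\yk)$, such that $t_k:=\|s^k-\xk\|+\|q^k-\yk\|$ satisfies $t_k/\sigk\to\infty$. Set $u^k:=s^k-\xk$ and $w^k:=q^k-\yk$. Also put $\zeta^k:=\Phi(\xk)+\nabla\Phi(\xk)u^k-\sigk w^k-\oz$ and $\nu^k:=q^k-\oy$, so that $\nu^k\in\sub(\tfrac12\d^2g(\oz,\oy))(\zeta^k)$.

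By \eqref{pwfor1}, the set $\gph \sub(\tfrac12\d^2g(\oz,\oy))=\gph D(\sub g)(\oz,\oy)$ is a finite union of convex polyhedral cones, so it is a closed cone. Using the first line of \eqref{au-KKT} together with $\nabla_xL(\ox,\oy)=0$, we write
\[
\nabla^2_{xx}L(\xk,\yk)u^k+\nabla\Phi(\xk)^*(w^k+\yk-\oy)=-\nabla_xL(\xk,\yk).
\]
The right-hand side is $O(\|\xk-\ox\|+\dist(\yk,\Lambda(\ox)))=O(\sigk)$ by the lower estimate in \eqref{eb} (here we use that the KKT residual is Lipschitz and vanishes on $\{\ox\}\times\Lambda(\ox)$). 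Similarly, $\Phi(\xk)-\oz=O(\sigk)$.

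The difficulty is that $\yk-\oy$ need not be $O(\sigk)$, since only $\dist(\yk,\Lambda(\ox))=O(\sigk)$. To handle this, pick $\hat y^k\in\Lambda(\ox)$ with $\|\yk-\hat y^k\|=O(\sigk)$, and work with $\hat\nu^k:=q^k-\hat y^k$ instead of $\nu^k$. Since $\hat y^k\in\sub g(\oz)$ is near $\oy$, the reduction lemma \eqref{rl2} shows that $\hat y^k-\oy\in D(\sub g)(\oz,\oy)(0)=K_g(\oz,\oy)^*$. One then checks that $\nu^k=\hat\nu^k+(\hat y^k-\oy)$ still relates $\zeta^k$ to the graph, with the shift orthogonal to $\zeta^k$ on the relevant critical pieces.

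Next, test the first equation with $u^k$, use the relation $\nabla\Phi(\xk)u^k=\zeta^k+\sigk w^k-(\Phi(\xk)-\oz)$, and apply monotonicity and positive homogeneity of degree two. The latter gives $\la\nu^k,\zeta^k\ra=\d^2g(\oz,\oy)(\zeta^k)+\la\oy,\zeta^k\ra$, and $\la\hat y^k-\oy,\zeta^k\ra\le0$. Together these yield
\[
\la\nabla^2_{xx}L u^k,u^k\ra+\d^2g(\oz,\oy)(\zeta^k)+\sigk\|w^k\|^2\le C\sigk\big(\|u^k\|+\|w^k\|\big)+\text{cross terms}.
\]
Now apply Proposition~\ref{ex_sosc} at $(\xk,\yk)$ to the pair $(u^k,w^k-\tfrac{1}{\sigk}(\Phi(\xk)-\oz))$, which is an element of $\K(\xk,\yk)$ (the correction is $O(1)$). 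This gives $\gamma(\|u^k\|^2+\sigk\|w^k\|^2)\le O(\sigk)(\|u^k\|+\|w^k\|)+O(\sigk^2)$. Hence $\|u^k\|=O(\sigk)+O(\sqrt{\sigk}\,\|w^k\|^{1/2})$ and $\sqrt{\sigk}\|w^k\|=O(\sqrt{\sigk})$, so $\|w^k\|$ is bounded.

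The \textbf{main obstacle} is upgrading this to $\|w^k\|=O(\sigk)$, because the SOSC controls the dual direction only with weight $\sigk$. For this, normalize by $t_k$: $(u^k,w^k)/t_k\to(u,w)\ne0$, and $u^k=o(t_k)$ forces $u=0$. Dividing the first equation by $t_k$ gives $\nabla\Phi(\ox)^*w=0$ in the limit. Dividing the graph relation by $t_k$ and using closedness of the conic graph gives $w\in D(\sub g)(\oz,\oy)(0)=K_g(\oz,\oy)^*$ by \eqref{pwfor2}, after taking the limit of $\zeta^k/t_k$, which equals $\nabla\Phi(\ox)u=0$ modulo $o(1)$. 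Thus $w\in K_g(\oz,\oy)^*\cap\ker\nabla\Phi(\ox)^*$, which is nonzero. This by itself is not a contradiction, since multipliers are nonunique, so the normalization must exploit the freedom in choosing $\hat y^k$. We therefore choose $\hat y^k$ as the projection of $q^k$ onto $\Lambda(\ox)$ near $\oy$, i.e., decompose $w^k$ modulo the polyhedral set $\Lambda(\ox)-\hat y^k$, which locally is $\oy+(K^*\cap\ker)$ by the reduction lemma. The limit $w$ then lies both in this cone and in its polar-type complement, forcing $w=0$ and giving the contradiction. Finally, $\|q^k-\yk\|\le\|q^k-\hat y^k\|+O(\sigk)$ and the triangle inequality yield \eqref{eb3}.
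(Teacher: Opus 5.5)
The first half of your argument, which yields $u=0$ and $\nabla\Phi(\bar x)^*w=0$, is essentially the paper's, up to a fixable slip noted at the end. There is a genuine gap, however, at the step you call the main obstacle, and the fix you propose does not work.

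The limit $w$ is the limit of $(q^k-y^k)/t_k$, so it is fixed by the data and cannot be changed by re-choosing the auxiliary point $\hat y^k$. If you take $\hat y^k:=P_{\Lambda(\bar x)}(q^k)$, you lose $\|y^k-\hat y^k\|=O(\sigma_k)$. The quantity that lands in a normal cone to $\Lambda(\bar x)$ is then $(q^k-\hat y^k)/t_k$, and it differs from $w$ by $(y^k-\hat y^k)/t_k$. This difference is of order one exactly when $q^k$ drifts along $\Lambda(\bar x)$ away from $y^k$, which is the scenario the proposition must exclude. Knowing $\dist(q^k,\Lambda(\bar x))=o(t_k)$ is perfectly compatible with $\|q^k-y^k\|\approx t_k\gg\sigma_k$. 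So no contradiction arises, and the claim that $w$ lies in a ``polar-type complement'' has no support.

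The preceding claim $w\in K_g(\bar z,\bar y)^*$ is also unjustified. Dividing $(\zeta^k,q^k-\bar y)\in\gph D(\partial g)(\bar z,\bar y)$ by $t_k$ requires $(y^k-\bar y)/t_k\to 0$, but only $\dist(y^k,\Lambda(\bar x))=o(t_k)$ is known. Your remark about the shift does not place $(\zeta^k,q^k-\hat y^k)$ in the graph.

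The missing idea is this: the component of $w$ in $E:=\ker\nabla\Phi(\bar x)^*$, which contains the directions of $\Lambda(\bar x)$, is visible only through the stabilization term $-\sigma_k(q^k-y^k)$ inside $\zeta^k$, that is, at scale $\sigma_k t_k$ rather than $t_k$. The paper proceeds as follows.

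1. Pass to a subsequence so that $\zeta^k$ lies in a fixed piece $K_{C_{i_0}}(\bar z,\bar y_{i_0})=\{v:\langle c^j,v\rangle\le 0,\ j\in J_0\}$ with a fixed active set $J$. Then $\zeta^k$ lies in the face $\mathcal B$.
2. Since $P_E\nabla\Phi(\bar x)=0$, $\|x^k-\bar x\|=O(\sigma_k)$ and $u^k=o(t_k)$, one gets $P_E\big(\zeta^k/(\sigma_k t_k)\big)\to -w$. By closedness of $P_E(\mathcal B)$, this gives $-w=P_E(b)$ for some $b\in\mathcal B$.
3. On the dual side, keep $\hat y^k=P_{\Lambda(\bar x)}(y^k)$, so that $\|\hat y^k-y^k\|=O(\sigma_k)$. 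The relations $q^k-\bar y-A_{i_0}\zeta^k\in\cone\{c^j\}_{j\in J}$ and $\hat y^k-\bar y\in\cone\{c^j\}_{j\in J_0}$ give $-w\in\spann\{c^j\}_{j\in J}+\cone\{c^j\}_{j\in J_0\setminus J}=\mathcal B^*$.
4. Hence $0\le\langle w,b\rangle=\langle w,P_E b\rangle=-\|w\|^2$, so $w=0$.

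Some primal-dual face argument of this kind at the $\sigma_k t_k$ scale is indispensable.

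The smaller slip: applying Proposition~\ref{ex_sosc} to $(u^k,w^k-\sigma_k^{-1}(\Phi(x^k)-\bar z))$ produces the term $\sigma_k^{-1}\|\Phi(x^k)-\bar z\|^2=O(\sigma_k)$, not $O(\sigma_k^2)$. Your bound therefore does not give $u^k=o(t_k)$ when $t_k=O(\sqrt{\sigma_k})$. Obtain $u=0$ instead by dividing the unshifted energy inequality by $t_k^2$ and applying the SOSC at $(\bar x,\bar y)$ in the limit, as the paper does.
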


\begin{proof}
Assume by contradiction that the estimate \eqref{eb3} fails. Then there exist sequence $(\xk, \yk)\to (\ox, \oy)$, with $\sigma_k:= \sigma(\xk, \yk)>0$, and $(s^k, q^k)$ such that
\begin{equation}\label{s8}
\|s^k-\xk\|+\|q^k-\yk\|>k \sigma_k,
\end{equation}
where $(s^k, q^k)$ satisfies
\begin{equation}\label{s9}
    0=\nabla\varphi(\xk)+ \Hk(s^k-\xk)+\nabla\Phi(\xk)^*q^k,\quad
q^k - \oy \in \sub\big(\tfrac{1}{2}\d^2g(\oz, \oy)\big)(\zk - \oz),
\end{equation}
where $\Hk := \nabla^2_{xx}L(\xk, \yk)$, $\zk :
= \Phi(\xk)+\nabla\Phi(\xk)(s^k-\xk)-\sigma_k(q^k-\yk)$, and $\oz := \Phi(\ox)$.
Denoting by $\beta_k := \|s^k-\xk\|+\|q^k-\yk\|$, we infer via \eqref{s8} that $\sigma_k = o(\beta_k)$ as $k\to\infty$. By the lower estimate in \eqref{eb} and the fact that $(\xk, \yk)\to (\ox, \oy)$ as $k\to\infty$, we then obtain  
\begin{equation}\label{s10}
\|\xk -\ox\| = o(\beta_k)\quad\textrm{and}\quad \|\yk -\hyk\|=o(\beta_k)\quad\textrm{as } k\to\infty,
\end{equation}
where $\hyk := P_{\Lm(\ox)}(\yk)$. Passing to a subsequence, if necessary, we can assume that
\begin{equation}\label{s11}
\Big(\frac{s^k-\xk}{\beta_k}, \frac{q^k-\yk}{\beta_k}\Big)\to (u, w)\;\;\textrm{as }\, k\to \infty\;\;\textrm{for some }\, (u, w)\neq 0.
\end{equation}
Recalling that $\hyk\in \Lm(\ox)$, we can then equivalently rewrite the first equality in \eqref{s9} as
\begin{equation*}
0=\nabla_x L(\xk, \yk) - \nabla_x L(\ox, \hyk) +\Hk(s^k-\xk)+\nabla\Phi(\xk)^*(q^k-\yk).
\end{equation*}
Dividing both sides by $\beta_k>0$,   passing through the limit as $k\to\infty$, and taking  into account \eqref{s10} and \eqref{s11}, we arrive at
\begin{equation}\label{s12}
0=\nabla^2_{xx}L(\ox, \oy)u+\nabla\Phi(\ox)^*w,
\end{equation}
which yields
\begin{equation}\label{s13}
0=\la\nabla^2_{xx}L(\ox, \oy)u, u\ra+\la\nabla\Phi(\ox)u, w\ra.
\end{equation}
On the one hand, we observe from the second relation in \eqref{s9} that $\zk - \oz \in \dom \d^2g(\oz, \oy) = K_g(\oz, \oy)$.
Thus, we get via \eqref{s10}, \eqref{s11}, and $\sigk = o(\beta_k)$ that
\begin{equation}\label{s16}
\frac{z^k-\oz}{\beta_k} = \frac{\Phi(\xk)-\Phi(\ox)}{\beta_k}+\nabla\Phi(\xk)\frac{s^k-\xk}{\beta_k}-\frac{\sigma_k}{\beta_k}(q^k-\yk)\to \nabla\Phi(\ox)u\; \; \textrm{as }\; k\to\infty,
\end{equation}
and arrive at $\nabla\Phi(\ox)u\in K_g(\oz, \oy)$. 
On the other hand, since that $\d^2g(\oz, \oy)$ is proper, convex, and positively homogeneous of degree $2$, the second inclusion in \eqref{s9} leads us to
\begin{equation*}
\d^2g(\oz, \oy)\Big(\frac{z^k-\oz}{\beta_k}\Big)=\Big\la \frac{z^k-\oz}{\beta_k},\frac{q^k - \oy}{\beta_k}\Big\ra,
\end{equation*}
see, e.g., the proof of \cite[Proposition~3.9]{s20}.
This equality in turn implies that
\begin{equation}\label{s15}
\d^2g(\oz, \oy)\Big(\frac{z^k-\oz}{\beta_k}\Big)=\Big\la \frac{z^k-\oz}{\beta_k},\frac{q^k-\hyk}{\beta_k}\Big\ra+ \frac{1}{\beta_k^2}\la z^k-\oz,\, \hyk - \oy\ra
\leq \Big\la \frac{z^k-\oz}{\beta_k},\frac{q^k-\hyk}{\beta_k}\Big\ra,
\end{equation}
due to $z^k - \oz \in K_g(\oz, \oy) = N_{\sub g(\oz)}(\oy)$ and $\hyk\in \sub g(\oz)$.
Passing to the limit as $k\to\infty$ in \eqref{s15}  and using to \eqref{s11} and \eqref{s16} bring us to 
\begin{equation*}
\d^2g(\oz, \oy)\big(\nabla\Phi(\ox)u\big)\leq \la \nabla\Phi(\ox)u, w\ra.
\end{equation*}
Combining this with \eqref{s13}, we arrive at
\begin{equation*}
\la\nabla^2_{xx}L(\ox, \oy)u, u\ra+\d^2g(\oz, \oy)\big(\nabla\Phi(\ox)u\big)\leq 0,
\end{equation*}
and which implies via SOSC \eqref{sosc} and $\nabla\Phi(\ox)u\in K_g(\oz, \oy)$ that $u=0$. 
Consequently, we get from \eqref{s12} that $\nabla\Phi(\ox)^*w=0$. 

Denote $E:= \ker\nabla\Phi(\ox)^*$. 
We finish the proof by showing that $w=P_E(w)=0$. 
Recall that $z^k - \oz\in K_g(\oz, \oy)$ with $K_g(\oz, \oy)$ being described in \eqref{domcr}.
Passing to a subsequence,  if necessary, we can assume without loss of generality that there is some index $i_0\in I(\oz)$ such that $z^k-\oz \in K_\Ci(\oz, \oy_\ii)$ for all $k$, where $\oy_\ii:= \oy-A_\ii\oz-a_\ii$. Since $K_\Ci(\oz, \oy_\ii)$ is a polyhedral cone, it has the representation
\begin{equation}\label{s21}
K_\Ci(\oz, \oy_\ii) = \big\{ v\in \R^m\, \big|\, \la c^j, v\ra\leq 0, \; j\in J_0\big\}
\end{equation}
for some  $ c^j \in \R^m, j \in J_0$, a finite index set. Denote by $J_k$ the set of the indexes of active inequalities that define $K_\Ci(\oz, \oy_\ii)$, in \eqref{s21}, at $z^k-\oz$ and assume without loss of generality that $J_k = J\subset J_0$ for all $k$. Define
\begin{equation*}
\mathcal{B}:=\big\{v\in K_\Ci(\oz, \oy_\ii) \, \big|\, \la c^j, v\ra =0, \; j\in J\big\}
\end{equation*}
and observe that
\begin{equation}\label{s20a}
  \mathcal{B}^*=\spann\big\{c^j \,\big|\, j\in J\big\}+\cone\big\{c^j \,\big|\, j\in J_0\setminus J\big\}.
\end{equation}
Clearly, $\mathcal{B}$ is a cone with $z^k-\oz\in \mathcal{B}$ for all $k$, and therefore
\begin{equation}\label{s17}
P_E\Big(\frac{z^k-\oz}{\sigma_k\beta_k}\Big)\in P_E(\mathcal{B}).
\end{equation}
It is worth noting that $P_E$ is a linear mapping and $P_E(\nabla\Phi(\ox)\R^n)=\{0\}$. Since $\Phi$ is $\mathcal{C}^2$-smooth, we have
\begin{equation*}
\frac{\Phi(\xk)-\Phi(\ox)}{\sigma_k\beta_k}=\frac{1}{\sigma_k\beta_k}\nabla\Phi(\ox)(\xk - \ox)+O\Big(\frac{\|\xk - \ox\|^2}{\sigma_k\beta_k}\Big).
\end{equation*}
which in turn, by the fact that $\|\xk -\ox\|= O(\sigk)=o(\beta_k)$, yields
\begin{equation}\label{s18}
P_E\Big(\frac{\Phi(\xk)-\Phi(\ox)}{\sigma_k\beta_k}\Big)= P_E\Big(O\Big(\frac{\|\xk - \ox\|^2}{\sigma_k\beta_k}\Big)\Big) \to 0 \quad\textrm{as } k\to\infty.
\end{equation}
Moreover, by the same fact and $s^k - \xk = o(\beta_k)$, we also obtain  
\begin{equation}\label{s19}
P_E\Big(\frac{1}{\sigma_k\beta_k}\nabla\Phi(\xk)(s^k-\xk)\Big) = P_E\Big(\frac{\nabla\Phi(\xk)-\nabla\Phi(\ox)}{\sigma_k}\frac{s^k-\xk}{\beta_k}\Big) \to 0\quad\textrm{as } k\to\infty.
\end{equation}
Combining \eqref{s11}, \eqref{s16}, and \eqref{s17}--\eqref{s19}, we arrive at $-P_E(w)\in P_E(\mathcal{B})$. Since $w\in E$, the latter inclusion confirms that 
\begin{equation*}
-w =-P_E(w) =P_E(b)\quad\textrm{for some}\quad b\in \mathcal{B}. 
\end{equation*}
Moreover, observe from the second relation in \eqref{s9} and \eqref{pwfor1} that
\begin{equation}\label{s22a}
q^k-\oy \in D(\sub g)(\oz, \oy)(z^k-\oz)\subset \big\{v\in\R^m\, \big|\, v-A_\ii(z^k-\oz)\in N_{K_\Ci(\oz,\, \oy_\ii)}(z^k-\oz)\big\},
\end{equation}
where we utilize the  fact that $z^k-\oz\in K_\Ci(\oz, \oy_\ii)$ for all $k$. Taking into account \eqref{s21} and the definition of the index set $J$, we get from \eqref{s22a} that
\begin{equation}\label{s20}
q^k - \oy - A_\ii(z^k-\oz)\in N_{K_\Ci(\oz,\, \oy_\ii)}(z^k-\oz) = \cone\big\{c^j \,\big|\, j\in J\big\}.
\end{equation}
On the other hand, since $\hyk, \oy\in\sub g(\oz)$ and $\hyk \to \oy$ as $k\to\infty$, we conclude from the reduction relation in \eqref{rl2} and the formulas in \eqref{domcr} and \eqref{pwfor2} that
\begin{equation*}
\hyk-\oy \in \big(K_g(\oz, \oy)\big)^*\subset \big(K_\Ci(\oz, \oy_\ii)\big)^* = \cone\big\{c^j \,\big|\, j\in J_0\big\},
\end{equation*}
where we use \eqref{s21} to derive the last equality.
Combining this and \eqref{s20} and using \eqref{s20a} result in
\begin{align*}
\hyk-q^k+A_\ii(z^k-\oz) &\in \cone\big\{c^j \,\big|\, j \in J_0\big\}-\cone\big\{c^j \,\big|\, j\in J\big\}\\
&=\spann\big\{c^j \,\big|\, j\in J\big\}+\cone\big\{c^j \,\big|\, j\in J_0\setminus J\big\} = \mathcal{B}^*.
\end{align*}
Recall that  $\beta_k>0$. Thus,  we have
\begin{equation*}
-\frac{q^k-\hyk}{\beta_k}+A_\ii\frac{z^k-\oz}{\beta_k}\in \mathcal{B}^*.
\end{equation*}
Passing to the limit as $k\to\infty$ and taking into account \eqref{s10}, \eqref{s11}, and \eqref{s16} with $u=0$, we deduce that $-w \in \mathcal{B}^*$.
In summary, we have shown that
\begin{equation*}
\begin{cases}
-w =-P_E(w) =P_E(b), \; \; b\in \mathcal{B},\\
-w\in \mathcal{B}^*.
\end{cases}
\end{equation*}
This clearly tell us that  
\begin{equation*}
0\leq \la w, b\ra = \la P_E(w), b\ra =\la w, P_E(b)\ra=\la w, -w\ra = -\|w\|^2,
\end{equation*}
and therefore $w=0$. Since $u=0$, the latter clearly contradicts \eqref{s11} and thus completes the proof.
\end{proof}

We are now in a position to present our main result in which the existence of solutions
to the variational system in \eqref{sSQP-KKT} is justified. Furthermore, it is shown that the primal-dual solution to \eqref{sSQP-KKT} can be chosen such that an error bound estimate for the consecutive sSQP iterates holds. The latter is central to ensure  the solvability  in Theorem~\ref{thm:fischer}(b) with a local condition for the generalized equation in \eqref{ge-A} associated with the proposed sSQP method.

\begin{Theorem}\label{thm:existence}
Let $(\ox, \oy)$ be a solution to the KKT system \eqref{vs} such that the SOSC \eqref{sosc} holds at $(\ox, \oy)$ and $\sigma$ satisfy both error bound estimates in \eqref{eb}. Then there exist positive constants $\varepsilon$ and $\kappa$ such that for all $(x, y)\in \B_\varepsilon(\ox, \oy)$ with $\sigma(x, y)>0$, there exists a solution $(s, q)$ to the variational system in \eqref{ssqp-kkt}, 
satisfying the error bound estimate
\begin{equation}\label{eb3.1}
\| s-x\|+\| q-y\|\leq \kappa\sigma(x, y).
\end{equation}
\end{Theorem}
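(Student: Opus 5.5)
The plan is to take a solution of the auxiliary system \eqref{au-KKT}, show it lies close to $(\ox,\oy)$, and then use the reduction lemma \eqref{rl2} to turn it into a solution of \eqref{ssqp-kkt}.

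First, shrink $\varepsilon$ to the minimum of the radii from Propositions~\ref{prop:solv} and \ref{prop:eb}, and also so that $\B_\varepsilon(\ox,\oy)\subset\mathcal V$. Fix $(x,y)\in\B_\varepsilon(\ox,\oy)$ with $\sigma(x,y)>0$. By Proposition~\ref{prop:solv}, the system \eqref{au-KKT} has a solution $(s,q)$. By Proposition~\ref{prop:eb}, every such solution satisfies \eqref{eb3}, namely $\|s-x\|+\|q-y\|\le\kappa\sigma(x,y)$. This is exactly \eqref{eb3.1}, so it remains only to show that $(s,q)$ also solves \eqref{ssqp-kkt}.

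Set $z:=\Phi(x)+\nabla\Phi(x)(s-x)-\sigma(x,y)(q-y)$. By \eqref{pwfor1}, the second relation in \eqref{au-KKT} reads
\[
(z-\oz,\;q-\oy)\in\gph D(\sub g)(\oz,\oy).
\]
Next, check that this pair lies in the neighborhood $\mathcal O$ of \eqref{rl2}. By the upper estimate in \eqref{eb}, $\sigma(x,y)\le\kappa_2(\|x-\ox\|+\dist(y,\Lambda(\ox)))\le\kappa_2\|(x-\ox,y-\oy)\|\le\kappa_2\varepsilon$. Hence
\[
\|q-\oy\|\le\|q-y\|+\|y-\oy\|\le\kappa\kappa_2\varepsilon+\varepsilon,
\]
and similarly
\[
\|z-\oz\|\le\|\Phi(x)-\Phi(\ox)\|+\|\nabla\Phi(x)\|\,\|s-x\|+\sigma(x,y)\|q-y\|=O(\varepsilon),
\]
using local Lipschitz continuity of $\Phi$ and boundedness of $\nabla\Phi$ near $\ox$. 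Since $\mathcal O$ is a fixed neighborhood of $(0,0)$, shrinking $\varepsilon$ further (with $\kappa$ unchanged) ensures $(z-\oz,q-\oy)\in\mathcal O$. The reduction lemma \eqref{rl2} then gives $(z-\oz,q-\oy)\in\gph\sub g-(\oz,\oy)$, that is, $q\in\sub g(z)$. The first equation of \eqref{au-KKT} coincides with the first equation of \eqref{ssqp-kkt}, so $(s,q)$ solves \eqref{ssqp-kkt}.

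The main obstacle is the uniform smallness of $(z-\oz,q-\oy)$. It rests entirely on Proposition~\ref{prop:eb} and the bound $\sigma\to0$ near $(\ox,\oy)$. The one point to handle carefully is that shrinking $\varepsilon$ does not change the constant $\kappa$, which is clear because Proposition~\ref{prop:eb} holds on any smaller ball.
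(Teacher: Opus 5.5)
Your proposal is correct and follows the paper's proof essentially step by step. You take a solution of \eqref{au-KKT} from Proposition~\ref{prop:solv}, get the bound \eqref{eb3} from Proposition~\ref{prop:eb}, shrink $\varepsilon$ so that $(z-\oz,q-\oy)$ lies in the neighborhood ${\cal O}$ of \eqref{rl2}, and conclude $q\in\sub g(z)$ from the reduction lemma; your explicit estimates for $\|q-\oy\|$ and $\|z-\oz\|$, together with the requirement $\B_\varepsilon(\ox,\oy)\subset\mathcal V$, just spell out the shrinking step that the paper states briefly.
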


\begin{proof}
Take the constants $\varepsilon, \kappa>0$  so that the conclusions in  Propositions~\ref{prop:solv} and ~\ref{prop:eb} hold. Thus, for any  $(x, y) \in \B_\varepsilon(\ox, \oy)$ with $\sigma(x, y)>0$, we can find a solution $(s, q)$ to \eqref{au-KKT}  that satisfies the error bound estimate in \eqref{eb3}.
Set $z := \Phi(x) + \nabla\Phi(x)(s-x) - \sigma(x, y)(q - y)$ and $\oz := \Phi(\ox)$.
Since $\sigma(x, y)\to 0$ as $(x, y) \to (\ox, \oy)$ and $\Phi$ is Lipschitz continuous, by shrinking the ball $\B_\varepsilon(\ox, \oy)$, if necessary, we can assume that $(z - \oz, q - \oy) \in {\cal O}$ for any  $(x, y) \in \B_\varepsilon(\ox, \oy)$, where ${\cal O}$ is the neighborhood of the origin in $\Y\times \Y$ taken from \eqref{rl2}.
Utilizing \eqref{pwfor1}, we then get from the second inclusion in \eqref{au-KKT} that
\begin{equation*}
    (z - \oz, q - \oy) \in \big(\gph D(\sub g)(\oz, \oy)\big) \cap {\cal O}.
\end{equation*}
The reduction formula \eqref{rl2} then tells us $(z, q) \in \gph \sub g$, or, equivalently, 
\begin{equation*}
    q \in \sub g\big(\Phi(x) + \nabla\Phi(x)(s-x) - \sigma(x, y)(q - y)\big).
\end{equation*}
This, together with the first equality in \eqref{au-KKT}, proves that $(s, q)$ indeed solves \eqref{ssqp-kkt}.
Since $(s, q)$ satisfies the estimate in \eqref{eb3}, we obtain \eqref{eb3.1}, which completes the proof.
\end{proof}

Theorem~\ref{thm:existence} extends the solvability and the error bound analysis of \cite[Theorem~3]{FeS10} for the classical NLPs to the composite optimization in \eqref{comp}. 
Beyond this generalization, it constitutes a pivotal step toward understanding the local convergence behavior of the sSQP method for the composite problem in \eqref{comp} and opens the door to investigate  similar results for conic and composite optimization problems where the modeling function does not have a polyhedral structure.

\section{Solvability of Subproblems under Noncriticality of Lagrange Multipliers}\label{sec:noncri}

While the SOSC, used in Section~\ref{sect:sosc}, is relatively mild assumption, it is tempting to look for possible ways to weaken this assumption. Our motivation comes from an observation in \cite{is12}, where it was shown that the local superlinear convergence of the sSQP method for NLPs with only equality constraints   can be ensured under a weaker assumption, called  noncriticality of  Lagrange multipliers. 
Motivated by this result, we investigate, in the present section, whether an analogous replacement can be extended to the composite problem \eqref{comp}. To this end, we first recall the concept of noncriticality of Lagrange multipliers for \eqref{comp} as introduced in \cite{s20}, which generalizes the original formulation for NLPs in \cite{iz08}.

\begin{Definition}
Let $(\ox,\oy)$ be a solution to the KKT system \eqref{vs}. Then the multiplier $\oy$ is said to be {critical} for \eqref{vs} if there is a nonzero vector $u\in\R^n$  satisfying the inclusion
\begin{equation}\label{crc}
0\in\nabla^2_{xx}L (\ox,\oy)u+\nabla \Phi(\ox)^*D(\sub g) (\Phi(\ox),\oy ) (\nabla \Phi(\ox)u ).
\end{equation}
The multiplier $\oy$ is {noncritical} for \eqref{vs} if \eqref{crc} admits only the trivial solution $u=0$. 
\end{Definition}

As established in \cite[Proposition~3.9]{s20}, the validity of the SOSC in \eqref{sosc} at $(\ox,\oy)$, a solution to the KKT system \eqref{vs}, implies noncriticality of the Lagrange multiplier $\oy$. However, as demonstrated via an NLP example in \cite[Example~3]{is12}, the SOSC is, in general, strictly stronger than the noncriticality condition.
The core importance of noncritical multipliers is captured in the following result, derived from \cite[Theorem~3.6 \& Proposition~3.8]{s20}. Such an equivalence was initially identified in \cite[Theorem~2.3]{iz08} within the context of NLPs, and subsequently extended to broader classes of constrained and composite optimization problems (see, e.g., \cite{ms18}).

\begin{Theorem}\label{chcri} Assume 
 that $(\ox,\oy)$ is a solution to the KKT system \eqref{vs}. Then the following  conditions are equivalent.
 \begin{itemize}[noitemsep,topsep=0pt]
 \item [{\rm (a)}] The multiplier $\oy$ is noncritical for the KKT system \eqref{vs}.
 \item [{\rm (b)}] There are numbers $\ve>0$ and $\tau\ge 0$ such that the error bound estimate
\begin{equation}\label{subr3.1}
\|x-\ox\|+\dist\big(y,\Lambda(\ox)\big)\le\tau \, r(x,y)
\end{equation}
holds for any $(x,y)\in\B_\ve(\ox,\oy)$, where the Lagrange multiplier set $\Lambda(\ox)$ is defined in \eqref{laset} and the residual function $r$ comes from \eqref{res}. 
 \end{itemize}
\end{Theorem}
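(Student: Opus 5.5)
I would prove both directions of the equivalence by contradiction and compactness. The only structural tools needed are the reduction lemma \eqref{rl2} and the formula \eqref{pwfor1} for $D(\sub g)$. Set $\oz:=\Phi(\ox)$.

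\emph{(b)$\Rightarrow$(a).} Suppose $\oy$ is critical, and take $u\neq 0$ and $v\in D(\sub g)(\oz,\oy)(\nabla\Phi(\ox)u)$ with $\nabla^2_{xx}L(\ox,\oy)u+\nabla\Phi(\ox)^*v=0$. For small $t>0$ consider $x_t:=\ox+tu$ and $y_t:=\oy+tv$.
\begin{itemize}[noitemsep,topsep=2pt]
\item Since $\gph D(\sub g)(\oz,\oy)$ is a cone, \eqref{rl2} gives $\oy+tv\in\sub g(\oz+t\nabla\Phi(\ox)u)$ for small $t$.
\item By \eqref{prm}, this means ${\rm prox}_g(\oz+t\nabla\Phi(\ox)u+y_t)=\oz+t\nabla\Phi(\ox)u$.
\item Nonexpansiveness of ${\rm prox}_g$ then yields $\|\Phi(x_t)-{\rm prox}_g(\Phi(x_t)+y_t)\|=O(\|\Phi(x_t)-\oz-t\nabla\Phi(\ox)u\|)=o(t)$.
\item Also $\nabla_xL(x_t,y_t)=t(\nabla^2_{xx}L(\ox,\oy)u+\nabla\Phi(\ox)^*v)+o(t)=o(t)$.
\end{itemize}
Hence $r(x_t,y_t)=o(t)$, while $\|x_t-\ox\|=t\|u\|$. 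This contradicts \eqref{subr3.1}.

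\emph{(a)$\Rightarrow$(b).} Suppose (b) fails. Then there are $(x^k,y^k)\to(\ox,\oy)$ with $r(x^k,y^k)=o(\beta_k)$, where $\beta_k:=\|x^k-\ox\|+\dist(y^k,\Lambda(\ox))>0$. Set $\hat y^k:=P_{\Lambda(\ox)}(y^k)$ and $z^k:={\rm prox}_g(\Phi(x^k)+y^k)$. By \eqref{prm} we have $\eta^k:=\Phi(x^k)+y^k-z^k\in\sub g(z^k)$, and $\|z^k-\Phi(x^k)\|=o(\beta_k)$, so $\eta^k-y^k=o(\beta_k)$. Moreover, $(z^k-\oz,\eta^k-\oy)\to 0$, so \eqref{rl2} puts this pair in $\gph D(\sub g)(\oz,\oy)$. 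Also $\hat y^k-\oy\in D(\sub g)(\oz,\oy)(0)=K_g(\oz,\oy)^*$ by \eqref{pwfor2}. Normalize $u^k:=(x^k-\ox)/\beta_k\to u$.

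\emph{Main obstacle.} The dual part does not converge nicely, because $\dist(y^k,\Lambda(\ox))/\beta_k$ may not vanish and $\Lambda(\ox)$ is not a singleton. My plan is to show $u\neq 0$ and that $u$ solves \eqref{crc}; this contradicts (a).

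\emph{Step 1: $u$ solves \eqref{crc}.} Write $\eta^k-\hat y^k=(\eta^k-\oy)-(\hat y^k-\oy)$. The polyhedral structure gives
\[
D(\sub g)(\oz,\oy)(w)+K_g(\oz,\oy)^*\subset D(\sub g)(\oz,\oy)(w),
\]
which follows from \eqref{pwfor1} and the fact that $K_{C_i}^*$ contains $N_{K_{C_i}}(w)$-type cones, as used in the proof of Proposition~\ref{prop:eb}. I would instead exploit that the graph of $D(\sub g)(\oz,\oy)$ is a finite union of polyhedral convex sets. Then Hoffman's lemma gives a representative $v^k\in D(\sub g)(\oz,\oy)((z^k-\oz)/\beta_k)$ with
\[
\nabla_{xx}^2L(\ox,\oy)u^k+\nabla\Phi(\ox)^*v^k=o(1).
\]
This comes from expanding $0\approx\nabla_xL(x^k,y^k)-\nabla_xL(\ox,\hat y^k)$, which is $o(\beta_k)$, together with the decomposition above. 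Choosing the $v^k$ bounded via Hoffman's error bound and passing to limits, closedness of the graph gives that $u$ solves \eqref{crc}.

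\emph{Step 2: $u\neq 0$.} If $u=0$, then $\dist(y^k,\Lambda(\ox))\sim\beta_k$. The same Hoffman argument, applied to the polyhedral description of $\Lambda(\ox)$ near $\oy$ (which by \eqref{rl2} equals $\oy$ plus the set of $v\in K_g(\oz,\oy)^*$ with $\nabla\Phi(\ox)^*v=0$, locally), then shows that $\dist(y^k,\Lambda(\ox))=o(\beta_k)$. This is a contradiction.
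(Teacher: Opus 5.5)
Your argument for (b)$\Rightarrow$(a) is correct as written. The plan for (a)$\Rightarrow$(b) is also workable: normalize once by $\beta_k$, then show the limit $u$ solves \eqref{crc} and is nonzero. The paper gives no proof here; it imports the result from \cite{s20}, where it passes through a calmness property of $\s$ relative to $\{\ox\}\times\Lambda(\ox)$. Your route is a different, self-contained one.

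However, Step~1 as written rests on a false claim. The inclusion $D(\sub g)(\oz,\oy)(w)+K_g(\oz,\oy)^*\subset D(\sub g)(\oz,\oy)(w)$ already fails for $g=\dd_{\R_-}$ on $\R$ with $\oz=\oy=0$. There $K_g(0,0)=\R_-$ and $K_g(0,0)^*=\R_+$, but $D(\sub g)(0,0)(-1)=N_{\R_-}(-1)=\{0\}$. The variant with $-K_g(\oz,\oy)^*$, which is what your decomposition of $\eta^k-\hyk$ would actually need, fails at $w=0$. So the bounded vectors $(\eta^k-\hyk)/\beta_k$ need not lie in $D(\sub g)(\oz,\oy)(d^k)$, where $d^k:=(z^k-\oz)/\beta_k$. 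Hoffman's lemma cannot manufacture a representative satisfying your approximate equation; it can only bound one you already have.

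The missing idea is that the troublesome term is annihilated: $\nabla\Phi(\ox)^*(\hyk-\oy)=0$, because both vectors lie in $\Lambda(\ox)$. Equivalently, expand $\nabla_xL(x^k,y^k)$ around $(\ox,\oy)$ rather than $(\ox,\hyk)$. Together with $y^k-\eta^k=o(\beta_k)$ this gives
\[
\nabla^2_{xx}L(\ox,\oy)u^k+\nabla\Phi(\ox)^*v^k=o(1)\quad\mbox{with}\quad v^k:=\frac{\eta^k-\oy}{\beta_k}\in D(\sub g)(\oz,\oy)(d^k),
\]
where membership follows from \eqref{rl2} and the conic graph. These $v^k$ may be unbounded, and that is exactly where piecewise polyhedrality enters. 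The set $\{(d,\nabla\Phi(\ox)^*v)\,|\,(d,v)\in\gph D(\sub g)(\oz,\oy)\}$ is a finite union of linear images of polyhedral sets, hence closed. It therefore contains the limit $(\nabla\Phi(\ox)u,-\nabla^2_{xx}L(\ox,\oy)u)$, which is \eqref{crc}.

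Step~2 similarly hides its essential ingredient behind ``the same Hoffman argument''. You need the upper Lipschitz property at the origin of the polyhedral multifunction $D(\sub g)(\oz,\oy)$:
\[
\dist\big(\eta^k-\oy,K_g(\oz,\oy)^*\big)\le c\,\|z^k-\oz\|.
\]
This follows by applying Hoffman's lemma on each polyhedral cone of $\gph D(\sub g)(\oz,\oy)$ and using \eqref{pwfor2}. If $u=0$, then $\|z^k-\oz\|=o(\beta_k)$, and the display above gives $\nabla\Phi(\ox)^*(y^k-\oy)=o(\beta_k)$. The set $\Lambda(\ox)-\oy$ coincides near $0$ with $K_g(\oz,\oy)^*\cap\ker\nabla\Phi(\ox)^*$, and Hoffman's lemma for that set yields $\dist(y^k,\Lambda(\ox))=o(\beta_k)$, which is the contradiction.

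With these two repairs your proof is complete. It merges the primal half (where noncriticality is used) and the dual half (automatic by polyhedrality) into one compactness argument. The cited route is more modular and passes through the calmness of $\s$, which the paper reuses in Proposition~\ref{prop:calm}.
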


We demonstrate below  that  Propositions~\ref{prop:solv} and \ref{prop:eb} can be justified even if we weaken the SOSC in \eqref{sosc}  to the noncriticality of the associated Lagrange multiplier.
These, however, can be achieved under an extra relative interior assumption on the multiplier $\oy$ and $g$ being piecewise linear. 
Note that a CPLQ function $g$ defined as in \eqref{PWLQ} is polyhedral (convex piecewise linear) if $A_i=0$ for all $i\in I$. 

\begin{Proposition}\label{prop:solv2}
Let $(\ox, \oy)$ be a solution to the KKT system \eqref{vs} with $g$ being a polyhedral function. Assume that $\oy\in \ri \sub g(\Phi(\ox))$ is noncritical and $\sigma$ satisfies both error bound estimates in \eqref{eb}.
Then there exists a positive constant $\varepsilon$ such that for all $(x, y)\in \B_\varepsilon(\ox, \oy)$ with $\sigma(x, y)>0$, the solution set to the variational system in \eqref{au-KKT} is always nonempty.
\end{Proposition}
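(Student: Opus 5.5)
The idea is to show that, under the present assumptions, \eqref{au-KKT} collapses to a square \emph{linear} system in $(s,q)$, and then to prove that this system is nonsingular for all $(x,y)$ near $(\ox,\oy)$ with $\sigma(x,y)>0$. Nonsingularity will be argued by contradiction: normalize a nontrivial kernel element and use the noncriticality of $\oy$ together with the lower bound in \eqref{eb}. This replaces the variational argument of Propositions~\ref{prop:existence} and \ref{prop:solv}.

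\emph{Step 1 (reduction to a linear system).} Set $K:=K_g(\oz,\oy)$. Since $g$ is polyhedral, \cite[Proposition~13.9]{rw} gives $\d^2 g(\oz,\oy)=\dd_K$, so $\sub(\tfrac12\d^2g(\oz,\oy))=N_K$. Since $\oy\in\ri\sub g(\oz)$ and $K=N_{\sub g(\oz)}(\oy)$ by \eqref{domcr}, the cone $K$ is the orthogonal complement of the subspace parallel to $\mathrm{aff}\,\sub g(\oz)$. Hence $K$ is a linear subspace and $N_K(v)=K^\perp$ for every $v\in K$. Writing $u:=s-x$ and $\sigma:=\sigma(x,y)$, system \eqref{au-KKT} becomes
\begin{equation*}
\nabla^2_{xx}L(x,y)u+\nabla\Phi(x)^*q=-\nabla\varphi(x),\quad P_{K^\perp}\big(\Phi(x)+\nabla\Phi(x)u-\sigma(q-y)-\oz\big)=0,\quad P_K(q-\oy)=0 .
\end{equation*}
Counting equations ($n+\dim K^\perp+\dim K=n+m$) shows this is a square linear system in $(u,q)\in\X\times\Y$. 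It therefore suffices to show that its homogeneous version has only the trivial solution. That homogeneous version reads
\begin{equation*}
\nabla^2_{xx}L(x,y)u+\nabla\Phi(x)^*w=0,\qquad \nabla\Phi(x)u-\sigma w\in K,\qquad w\in K^\perp .
\end{equation*}
For the same reason, $D(\sub g)(\oz,\oy)(v)=K^\perp$ for $v\in K$ by \eqref{pwfor1} with $A_i=0$. Noncriticality of $\oy$ thus says: $\nabla\Phi(\ox)u\in K$ and $\nabla^2_{xx}L(\ox,\oy)u\in\nabla\Phi(\ox)^*K^\perp$ together force $u=0$.

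\emph{Step 2 (contradiction setup).} Suppose there are $(\xk,\yk)\to(\ox,\oy)$ with $\sigk>0$ and nonzero kernel elements $(\uk,\wk)$. If $\uk=0$, then $\wk\in K$ (from $-\sigk\wk\in K$) and $\wk\in K^\perp$, so $\wk=0$. Hence $\uk\neq0$, and the goal is to show $\|\wk\|=O(\|\uk\|)$. Once this holds, normalizing by $\|\uk\|$ and passing to the limit gives $\tilde u\neq0$ and $\tilde w\in K^\perp$ with $\nabla^2_{xx}L(\ox,\oy)\tilde u+\nabla\Phi(\ox)^*\tilde w=0$ and $\nabla\Phi(\ox)\tilde u\in K$. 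Here the $\sigma_k$-term vanishes because $\sigma_k\to0$ and $\wk/\|\uk\|$ is bounded. This contradicts noncriticality.

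\emph{Step 3 (the bound $\|\wk\|=O(\|\uk\|)$; the main obstacle).} The difficulty is that $\nabla\Phi(\ox)^*$ need not be injective on $K^\perp$, since multipliers may be nonunique. Let $F:=K^\perp\cap\ker\nabla\Phi(\ox)^*$ and split $\wk=\eta^k+\xi^k$, where $\eta^k=P_F\wk$ and $\xi^k\in K^\perp\ominus F$. On $K^\perp\ominus F$ we have $\|\nabla\Phi(\ox)^*\xi\|\ge c\|\xi\|$ for some $c>0$.

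First, bound $\eta^k$. Since $\eta^k\in K^\perp$, pairing $\nabla\Phi(\xk)\uk-\sigk\wk\in K$ with $\eta^k$ gives $\sigk\|\eta^k\|^2=\la \uk,\nabla\Phi(\xk)^*\eta^k\ra$. Using $\nabla\Phi(\ox)^*\eta^k=0$, the right-hand side equals $\la\uk,(\nabla\Phi(\xk)-\nabla\Phi(\ox))^*\eta^k\ra$. This is at most $L\|\xk-\ox\|\,\|\uk\|\,\|\eta^k\|$, where $L$ is a local Lipschitz constant of $\nabla\Phi$. The lower estimate in \eqref{eb} gives $\|\xk-\ox\|\le\sigk/\kappa_1$, so $\|\eta^k\|\le (L/\kappa_1)\|\uk\|$. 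This is exactly where $\oy\in\ri$, polyhedrality (so that $K$ is a subspace) and the lower bound of $\sigma$ enter.

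Next, bound $\xi^k$. The first kernel equation gives $\nabla\Phi(\ox)^*\xi^k=-\Hk\uk-\nabla\Phi(\xk)^*\eta^k-(\nabla\Phi(\xk)-\nabla\Phi(\ox))^*\xi^k$. Therefore $c\|\xi^k\|\le C\|\uk\|+o(1)\|\xi^k\|$, and so $\|\xi^k\|=O(\|\uk\|)$.

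This establishes the bound in Step 3, so Step 2 gives the contradiction. Consequently the linear system of Step 1 is nonsingular for $(x,y)\in\B_\varepsilon(\ox,\oy)$ with $\sigma(x,y)>0$, for some $\varepsilon>0$. Its unique solution $(s,q)$ solves \eqref{au-KKT}.
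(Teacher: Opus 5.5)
Your proof is correct and follows the paper's architecture: reduce \eqref{au-KKT} to a linear system because $K$ is a subspace, argue nonsingularity by contradiction, show $u^k\neq0$ and $\|w^k\|=O(\|u^k\|)$, then pass to a limit that contradicts noncriticality. The only difference is that you get $\|w^k\|=O(\|u^k\|)$ directly, with explicit constants, by splitting $w^k$ along $K^\perp\cap\ker\nabla\Phi(\ox)^*$, whereas the paper uses a nested contradiction argument with the projection onto $\ker\nabla\Phi(\ox)^*$; both rest on pairing $K$ against $K^\perp$ and on $\|x^k-\ox\|=O(\sigma_k)$ from the lower bound in \eqref{eb}.
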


\begin{proof}
Set $\oz:=\Phi(\ox)$ and $K:=K_g(\oz,\oy)$.
Since $g$ is polyhedral, we get from \eqref{domcr} that $\d^2g(\oz,\oy)=\dd_K$, and hence,
\begin{equation*}
    D(\sub g)(\oz,\oy)=\partial\left(\tfrac{1}{2} \d^2g(\oz,\oy)\right)=N_K.
\end{equation*}
Moreover,  the critical cone $K=N_{\sub g(\oz)}(\oy)$ is a linear subspace, due to $\oy\in\ri\,\sub g(\oz)$, and we have
\begin{equation*}
    N_K(v)=K^\perp \qquad \mbox{for all } \;v\in K.
\end{equation*}
Thus, the variational  system \eqref{au-KKT} takes the form
\begin{equation}\label{eq:linear}
\begin{cases}
\nabla\varphi(x)+\nabla^2_{xx}L(x,y)(s-x)+\nabla\Phi(x)^*q=0,\\
\Phi(x)+\nabla\Phi(x)(s-x)-\sigma(x,y)(q-y)-\bar z\in K,\\
q-\bar y\in K^\perp.
\end{cases}
\end{equation}
Set $u:=s-x$ and $w:=q-\oy$.
We can then equivalently express \eqref{eq:linear} as the problem of finding
$(u, w)\in\X\times K^\perp$ such that
\begin{equation}\label{eq:projected}
\begin{cases}
\nabla^2_{xx}L(x,y)u+\nabla\Phi(x)^*w=-\nabla_xL(x,\oy),\\
P_{K^\perp}\big(\nabla\Phi(x)u-\sigma(x,y)w\big)=-P_{K^\bot}\big(\Phi(x)-\oz +\sigma(x, y)(y-\oy)\big).
\end{cases}
\end{equation}
Given the pair $(x,y)\in \R^n\times \R^m$, we define the linear operator $ M_{x,y}:\R^n\times K^\perp\to \R^n\times K^\perp$
by
\begin{equation*}
    M_{x,y}(u,w):=
  \Big(\nabla^2_{xx}L(x,y)u+\nabla\Phi(x)^*w,
  P_{K^\perp}\big(\nabla\Phi(x)u-\sigma(x,y)w\big)\Big).
\end{equation*}
We claim that for any pair $(x,y)$ sufficiently close to $(\ox,\oy)$ with $\sigma(x,y)>0$,  $M_{x,y}$ is nonsingular. This clearly implies that   \eqref{eq:projected} has a solution.
To justify this claim, suppose on the contrary that it fails.
That means  we can find sequences $(\xk,\yk)\to(\ox,\oy)$, with $\sigk:=\sigma(\xk,\yk)>0$ for all $k$, and $(u^k,w^k)\neq (0, 0)$ in $\R^n\times K^\perp$ satisfying
\begin{equation}\label{eq:non}
  H_ku^k+\nabla\Phi(\xk)^*w^k=0,
  \quad
  \nabla\Phi(\xk)u^k- \sigk w^k \in K,
\end{equation}
where $H_k:=\nabla^2_{xx}L(\xk, \yk)$. 
Observe from the inclusion in \eqref{eq:non} that $u^k\neq0$. 
Since, otherwise, we get $\sigk w^k \in K$, and therefore $w^k=0$ due to $\sigk\neq0$ and $w^k\in K^\bot$.
We can then assume, without loss of generality, that the sequence $u^k/\|u^k\|$ converges to some nonzero vector $\xi$.
The relations in \eqref{eq:non} then give us
\begin{equation}\label{eq:non1}
H_k\frac{u^k}{\|u^k\|} + \big(\nabla\Phi(\xk)-\nabla\Phi(\ox)\big)^*\frac{w^k}{\|u^k\|}\in \nabla\Phi(\ox)^*K^\bot,
\quad
\nabla\Phi(\xk)\frac{u^k}{\|u^k\|} - \sigk \frac{w^k}{\|u^k\|} \in K.
\end{equation}
We now claim that $w^k = O(\|u^k\|)$.
Assume otherwise that $w^k\neq 0$ for all $k$ and $u^k = o(\|w^k\|)$ as $k\to\infty$. Dividing the equality and inclusion in \eqref{eq:non} respectively by $\|w^k\|$ and $\sigk\|w^k\|$, we get that
\begin{equation}\label{eq:non2}
H_k\frac{u^k}{\|w^k\|} + \nabla\Phi(\xk)^*\frac{w^k}{\|w^k\|} = 0,
\quad
\frac{\nabla\Phi(\xk)}{\sigk}\frac{u^k}{\|w^k\|} -\frac{w^k}{\|w^k\|} \in K.
\end{equation}
Assume, without loss of generality, that $w^k/\|w^k\| \to \eta \neq 0$. Passing to the limit in the first relationship in \eqref{eq:non2}    yields $\nabla\Phi(\ox)^*\eta = 0$.
Denote $E:= \ker\nabla\Phi(\ox)^*$.
We know that the projection $P_E$ is a linear mapping satisfying $P_E(\nabla\Phi(\ox)\X) = \{0\}$.
Since  $\xk - \ox = O(\sigk)$ due to  \eqref{eb} and $u^k = o(\|w^k\|)$, we have
\begin{equation*}
    P_E\Big(\frac{\nabla\Phi(\xk)}{\sigk}\frac{u^k}{\|w^k\|}\Big) = P_E\Big(\frac{\nabla\Phi(\xk)-\nabla\Phi(\ox)}{\sigk}\frac{u^k}{\|w^k\|}\Big) \to 0\quad\textrm{as }\; k\to\infty,
\end{equation*}
This, combined with the inclusion in \eqref{eq:non2}, gives us $-P_E(\eta) \in P_E(K)$.
We can then select $\hat\eta\in K$ with $P_E(\hat\eta) = P_E(\eta) = \eta$, since $\eta \in E$. On the other hand, it follows from $w^k\in K^\perp$ that $\eta\in K^\perp$. Thus, we obtain 
\begin{equation*}
    0 = \la \eta, \hat\eta\ra =  \la \eta, P_E(\hat\eta)+P_{E^\perp}(\hat\eta)\ra = \la \eta, P_E(\hat\eta)\ra = \|\eta\|^2.
\end{equation*}
We then have $\eta = 0$, a contradiction. This proves our cliam that $w^k = O(\|u^k\|)$. 
Recalling the error bounds \eqref{eb}, we get $\sigk\to0$ as $k\to\infty$.
Passing to the limit in both relationships in \eqref{eq:non1} and taking into account that $\nabla\Phi(\ox)^*K^\bot$ is a convex polyhedral set, we arrive at
\begin{equation*}
  \nabla\Phi(\ox)\xi\in K,\qquad \nabla^2_{xx}L(\ox, \oy)\xi \in \nabla\Phi(\ox)^*K^\bot=\nabla\Phi(\ox)^*D(\sub g)(\oz,\oy)(\nabla\Phi(\ox)\xi).
\end{equation*}
This along with $\xi\neq 0$ tells us  that $\oy$ is a critical multiplier, a contradiction, and therefore completes the proof. 
\end{proof}

The solvability of the sSQP subproblems, derived in Proposition~\ref{prop:solv2}, generalizes a similar observation in \cite[Lemma~2]{is12} for NLPs with equality constraints. Note that the latter was established using different arguments based on the nonsingularity of perturbed matrices obtained in \cite[Lemma~1]{is12}. 
It is worth mentioning that the relative interior condition was not explicitly stated therein; however, it is trivially satisfied since such a framework corresponds to  choosing the modeling function $g = \delta_{\{0\}^m}$, which tells us  $\sub g(z) = \Y$ for all $z$.

One may ask whether the relative interior condition in Proposition~\ref{prop:solv2} can be dropped for the solvability of the sSQP subproblems. The following example illustrates that  the solvability may fail of this condition is not satisfied.

\begin{Example}
    In the framework of the composite optimization problem \eqref{comp}, let $\varphi(x)=-x-\frac12x^2$, $\Phi(x)=x$ for $x\in \R$, and $g(z)=z+\delta_{\R_-}(z)$ for $z\in\R$. 
Set $\ox=0$, $\oy=1$, and $\oz := \Phi(\ox) = 0$. We get $\partial g(0)=[1,+\infty)$ and observe that $\bar y\in\partial g(\oz)$ but $\bar y\notin \ri\partial g(\oz)$.
The Lagrangian, corresponding to this composite problem,  is $ L(x,y)=-x-\frac12x^2+xy$.
Thus, $\nabla_xL(x,y)=-1-x+y$ and $\nabla_xL(0,1)=0$. 
Combining these tells us that $(\ox,\oy)$ is a solution to the associated KKT system in \eqref{vs}. 
On the other hand, the critical cone of $g$ at $\oz$ for $\oy$ is
$K_g(0,1)=N_{\partial g(0)}(1)=\R_-$, which implies that 
$$
    \d^2g(0,1)(w)=\dd_{K_g(0,1)}(w)= \dd_{\R_-}(w)\qquad\mbox{and}\qquad D(\partial g)(0,1)(w)=N_{\R_-}(w)
$$
for any $w\in \R$. Since $\nabla^2_{xx}L(0,1)=-1$ and 
    $\nabla\Phi(0)=1$, the condition \eqref{crc} reads as 
    $$
     0\in -w+N_{\R_-}(w).
    $$
It is not hard to see that  the only solution to this inclusion is $w=0$, and therefore $\bar y=1$ is a noncritical multiplier. 
Observe that the Lagrange multiplier set at $\ox$ is $\Lambda(\ox)=\{1\}$.
Define 
\begin{equation*}
 \sigma(x,y):=|x|+|y-1|=|x|+\dist(y,\Lambda(\ox)), \quad (x, y) \in \R^2.
\end{equation*} 
It is clear that $\sigma$ satisfies the two-sided error bound estimates in \eqref{eb} with constants  $\kappa_1=\kappa_2=1$. 

Set 
$x_t=0$ and  $y_t=1+t$ for $t\in (0,1)$. 
We then have $\sigma(x_t,y_t)=t$.  
We will show that both variational systems \eqref{au-KKT} and \eqref{ssqp-kkt} fail to have a solution with $(x,y)=(x_t,y_t)$ for any $ t\in (0,1)$. The first equation in both variational systems becomes
$0=-1-s+q$, which results in $q = s +1$. 
The second inclusion in the auxiliary system \eqref{au-KKT} reads as
$q-\bar y\in N_{\R_-}\big(s-t(q-y_t)\big)$. 
Substituting $\bar y=1$, $y_t=1+t$, and $q = s + 1$, we obtain
\begin{equation}\label{eq:poly-core}
    s\in N_{\R_-}\big((1-t)s+t^2\big).
\end{equation}
Set $r:=(1-t)s+t^2$. If $r<0$, we deduce from \eqref{eq:poly-core} that $s=0$, which results in $r=t^2>0$, a contradiction. If  $r=0$, then $s=-{t^2}/{(1-t)}<0$. However, \eqref{eq:poly-core} would require that $s\ge 0$, which is not possible. Finally, if $r>0$, then \eqref{eq:poly-core} cannot be fulfilled. 
Thus, the auxiliary system \eqref{au-KKT} has no solution with $(x, y) = (x_t,y_t)$ for any $t\in (0,1)$.

To show that the same conclusion holds for \eqref{ssqp-kkt}, we conclude from $q=s+1$ and $r=(1-t)s+t^2$ that the inclusion in \eqref{ssqp-kkt} becomes 
\begin{equation}\label{sikkt}
q\in \partial g (r).
\end{equation} If $r<0$, we deduce from $\partial g(r)=\{1\}$ that  $s+1=1$. Thus,  $s=0$, which gives $r=t^2>0$, a contradiction.  If $r=0$, we get $s=-{t^2}/{(1-t)}<0$ and therefore $q=s+1<1$.
In this case, it follows from $\partial g(0)=[1,\infty)$ that $q\notin\partial g(0)$, a contradiction with \eqref{sikkt}.  If $r>0$, we clearly have $\partial g(r)=\emptyset$.  Thus, \eqref{ssqp-kkt} has no solution with $(x,y)=(x_t,y_t)$ for any $t\in (0,1)$.
\end{Example}

The next example reveals that the conclusion of Proposition~\ref{prop:solv2} may fail if the modeling function $g$ is CPLQ but is not polyhedral. 
In particular, it shows that the relative interior condition $\oy\in \ri \sub g(\oz)$ alone is insufficient to guarantee the solvability of the sSQP subproblems.

\begin{Example}
 Consider the composite optimization problem \eqref{comp} with $\varphi(x)=-\frac12x^2$, $\Phi(x)=x$ for $x\in \R$, and
  $$
  g(z) = \begin{cases}
    0 & \mbox{if}\;\; z\le 0,\\
    z^2 & \mbox{if}\;\; z> 0.
    \end{cases}
 $$
 Clearly, $g$ is a CPLQ function but is not polyhedral.
 Let $\bar x=\bar y=0$. Then $\bar z:=\Phi(\bar x)=0$ and $\oy\in \sub g(\oz)=\{0\}$.
 The Lagrangian of this problem takes the form $L(x,y)=-\frac12x^2+xy$, and hence $ \nabla_xL(x,y)=-x+y$. Since $\nabla_xL(\ox,\oy)=0$, we conclude that $(\ox,\oy)$ is a KKT point.
 The second subderivative of $g$ at $\oz$ for $\oy$ is given by 
$$
    h(w):=\d^2g(\oz,\oy)(w)=
    \begin{cases}
    0 & \mbox{if}\;\; w\le 0,\\
    2w^2 & \mbox{if}\;\; w>0,
    \end{cases}
$$ 
which in turn yields
$$
D(\partial g)(\oz,\oy)(w)=\partial \Big(\frac{1}{2}\d^2g(\oz,\oy)\Big)(w)= \begin{cases}
    \{0\} & \mbox{if}\;\; w\le 0,\\
    \{2w\} & \mbox{if}\;\; w>0.
    \end{cases}
$$
The inclusion in \eqref{crc} then takes the form 
\[
    0\in \nabla^2_{xx}L(\ox,\oy)w+D(\partial g)(\oz,\oy)(w)
   =\begin{cases}
    \{-w\} & \mbox{if}\;\; w\le 0,\\
    \{w\} & \mbox{if}\;\; w>0.
    \end{cases}
\]
If $w<0$, this gives $0=-w$, which is not possible.    
If $w>0$, it gives $0=w$, again a contradiction. 
The latter inclusion clearly holds for $w = 0$.
Thus the only solution of this inclusion is $w=0$, meaning that $\bar y$ is a noncritical Lagrange multiplier.
Moreover, the Lagrange multiplier set at $\ox$ is given by $\Lambda(\ox)=\{0\}$.
Define 
$$  
\sigma(x,y):=|x|+|y|=|x|+\dist(y,\Lambda(\ox)), \quad (x, y) \in \R^2.
$$
It can be seen that $\sigma$ satisfies the two-sided error bound estimates in \eqref{eb} with constants  $\kappa_1=\kappa_2=1$. 

Set
$x_t=0$ and  $y_t=t$ for $t\in (0,1/2)$. We then get 
 $\sigma(x_t,y_t)=t$.  We will show that both variational systems \eqref{au-KKT} and \eqref{ssqp-kkt} fail to have a solution with $(x,y)=(x_t,y_t)$ for any $t\in (0, 1/2)$. The first equation in both variational systems takes the form
$0=-s+q$, which yields $s=q$. Set $r:=(1-t)s+t^2$. The second inclusion in the auxiliary system \eqref{au-KKT} then reads as
\begin{equation}\label{eq:cplq-core}
    s\in \frac12\partial h(r).
\end{equation}
Suppose that $r\le0$.  Since  $\partial h(r)=\{0\}$, it follows from  \eqref{eq:cplq-core} that $s=0$, which yields $r=t^2>0$, a contradiction.  If $r>0$, we deduce from \eqref{eq:cplq-core} that $s=2r=2((1-t)s+t^2)$. This leads to $r = s/2={t^2}/{(2t-1)}<0$ due to  $t\in (0,1/2)$,
which contradicts $r>0$.  Thus the auxiliary system has no solution with $(x, y) =(x_t,y_t)$ for any $t\in (0,1/2)$. 

To justify the same conclusion for \eqref{ssqp-kkt}, we observe first that 
$$
\partial g(z)=
    \begin{cases}
    \{0\} & \mbox{if}\;\; z\le 0,\\
    \{2z\} & \mbox{if}\;\; z>0.
    \end{cases}
$$
Thus, the inclusion $q\in\partial g(r)$ in \eqref{ssqp-kkt}, together with $q=s$  and $r=(1-t)s+t^2$, leads us to the same cases as those of the auxiliary system,  discussed above.  Therefore, the variational system \eqref{ssqp-kkt} has no solution with $(x,y)=(x_t,y_t)$ whenever $t\in(0,1/2)$.
\end{Example}

We next establish an error bound estimate similar to the one in  Proposition~\ref{prop:eb} when the SOSC is replaced with noncriticality of Lagrange multipliers. 
\begin{Proposition}\label{prop:eb2}
Let $(\ox, \oy)$ be a solution to the KKT system \eqref{vs} with $g$ being a polyhedral function. 
Assume that $\oy\in \ri \sub g(\Phi(\ox))$ is noncritical and $\sigma$ satisfies both error bound estimates in \eqref{eb}.
Then there exist positive constants $\varepsilon$ and $\kappa$ such that for all $(x, y)\in \B_\varepsilon(\ox, \oy)$ with $\sigma(x, y)>0$, the error bound estimate \eqref{eb3} holds for any solution $(s, q)$ to the system \eqref{au-KKT}.
\end{Proposition}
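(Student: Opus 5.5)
We argue by contradiction, following the pattern of Proposition~\ref{prop:eb}, but now using the linear structure available when $g$ is polyhedral and $\oy\in\ri\sub g(\oz)$. As in the proof of Proposition~\ref{prop:solv2}, set $\oz:=\Phi(\ox)$ and $K:=K_g(\oz,\oy)$. Then $K$ is a linear subspace and $D(\sub g)(\oz,\oy)=N_K$, which equals $K^\perp$ on $K$. So any solution $(s,q)$ of \eqref{au-KKT} satisfies the linear relations \eqref{eq:linear}.

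Suppose the estimate fails. Then there are $(\xk,\yk)\to(\ox,\oy)$ with $\sigk:=\sigma(\xk,\yk)>0$ and solutions $(s^k,q^k)$ with $\beta_k:=\|s^k-\xk\|+\|q^k-\yk\|>k\sigk$. By the lower bound in \eqref{eb}, $\|\xk-\ox\|=o(\beta_k)$ and $\|\yk-\hyk\|=o(\beta_k)$, where $\hyk:=P_{\Lm(\ox)}(\yk)$. Passing to a subsequence, we may assume $((s^k-\xk)/\beta_k,(q^k-\yk)/\beta_k)\to(u,w)\neq0$.

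\textbf{Step 1: $u=0$.} Write the first equation as $0=\nabla_xL(\xk,\yk)-\nabla_xL(\ox,\hyk)+\Hk(s^k-\xk)+\nabla\Phi(\xk)^*(q^k-\yk)$. Dividing by $\beta_k$ gives $0=\nabla^2_{xx}L(\ox,\oy)u+\nabla\Phi(\ox)^*w$. From $z^k-\oz\in K$, as in \eqref{s16}, we get $\nabla\Phi(\ox)u\in K$. Next, $q^k-\oy\in K^\perp$, and $\hyk-\oy\in K^\perp$ because $\Lm(\ox)\subset\sub g(\oz)$ and $\sub g(\oz)-\oy\subset K^\perp$ near $\oy$ by the relative interior condition. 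Hence $q^k-\hyk\in K^\perp$, and therefore $w\in K^\perp$ (note $\yk-\hyk=o(\beta_k)$). So $\nabla^2_{xx}L(\ox,\oy)u\in -\nabla\Phi(\ox)^*K^\perp=\nabla\Phi(\ox)^*N_K(\nabla\Phi(\ox)u)$, using that $K^\perp$ is a subspace. Noncriticality of $\oy$ then forces $u=0$.

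\textbf{Step 2: $w=0$.} With $u=0$, Step 1 gives $\nabla\Phi(\ox)^*w=0$, so $w\in E:=\ker\nabla\Phi(\ox)^*$, and also $w\in K^\perp$. Now project $(z^k-\oz)/(\sigk\beta_k)\in K$ onto $E$. Repeating \eqref{s18}--\eqref{s19}, with $\|\xk-\ox\|=O(\sigk)$ and $s^k-\xk=o(\beta_k)$, the $\Phi$- and $\nabla\Phi(\xk)(s^k-\xk)$-terms vanish in the limit. The remaining term is $-P_E((q^k-\yk)/\beta_k)\to -w$, so $-w\in P_E(K)$, since $P_E(K)$ is a subspace and hence closed. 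Pick $b\in K$ with $P_E(b)=-w$. Then $0=\la w,b\ra=\la P_Ew,b\ra=\la w,P_Eb\ra=-\|w\|^2$, so $w=0$. This contradicts $(u,w)\neq0$.

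\textbf{Main obstacle.} The delicate point is Step 1, specifically justifying $\hyk-\oy\in K^\perp$ and passing the noncriticality inclusion to the limit. The relative interior assumption is essential here: it makes $K$ a subspace, so that $N_K$ is the fixed subspace $K^\perp$ independent of the point, and it puts every multiplier near $\oy$ in $\oy+K^\perp$. I would try first to obtain the latter from the fact that $\sub g(\oz)$ is a polyhedron whose affine hull near $\oy$ is $\oy+K^\perp$, or alternatively from \eqref{rl2} together with \eqref{pwfor2}, which give $\hyk-\oy\in K^*=K^\perp$.
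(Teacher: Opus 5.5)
Your proposal is correct and follows the paper's proof essentially step by step. It uses the same contradiction sequence, the same limit relations $\nabla^2_{xx}L(\ox,\oy)u+\nabla\Phi(\ox)^*w=0$ and $\nabla\Phi(\ox)u\in K$, the same inclusion $w\in K^\perp$ obtained from $q^k-\oy\in K^\perp$ and $\hyk-\oy\in K^\perp$, and noncriticality to force $u=0$. For $w=0$, the paper simply refers back to the second half of the proof of Proposition~\ref{prop:eb}, whereas you carry that argument out directly; since $K$ is a subspace here, $\mathcal{B}$ reduces to $K$ and $\mathcal{B}^*$ to $K^\perp$, so your $P_E(K)$ argument is exactly what the paper's argument specializes to.
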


\begin{proof}
Assume by contradiction that the estimate \eqref{eb3} fails. 
Recall the equivalent form \eqref{eq:linear} of \eqref{au-KKT} when $g$ is a polyhedral function and $\oy\in \ri \sub g(\Phi(\ox))$.
Then there exist sequences $(\xk, \yk)\to (\ox, \oy)$, with $\sigma_k:= \sigma(\xk, \yk)>0$, and $(s^k, q^k)$ such that
\begin{equation}\label{s8.1}
\|s^k-\xk\|+\|q^k-\yk\|>k \sigma_k,
\end{equation}
where $(s^k, q^k)$ satisfies
\begin{equation}\label{s9.1}
    0=\nabla\varphi(\xk)+ \Hk(s^k-\xk)+\nabla\Phi(\xk)^*q^k, \quad \zk - \oz \in K, \quad q^k - \oy \in K^\bot,
\end{equation}
where $\Hk := \nabla^2_{xx}L(\xk, \yk)$, $\zk :
= \Phi(\xk)+\nabla\Phi(\xk)(s^k-\xk)-\sigma_k(q^k-\yk)$, $\oz := \Phi(\ox)$, and $K := K_g(\oz, \oy)$.
Set $\beta_k := \|s^k-\xk\|+\|q^k-\yk\|$ and $\hyk := P_{\Lm(\ox)}(\yk)$.
Repeating the arguments leading to \eqref{s12} and \eqref{s16} in the proof of Proposition~\ref{prop:eb}, which do not employ SOSC \eqref{sosc}, we get
\begin{equation}\label{s12.1}
0=\nabla^2_{xx}L(\ox, \oy)u+\nabla\Phi(\ox)^*w,
\quad 
\nabla\Phi(\ox)u\in K
\end{equation}
where $(u, w)\neq 0$ is defined similarly as in \eqref{s11}.
We now claim that $w\in K^\bot$.
Recall first that $\hyk, \oy\in\sub g(\oz)$ and $\hyk \to \oy$ as $k \to\infty$. The reduction relation in \eqref{rl2}, together with \eqref{pwfor2}, then yields $\hyk - \oy \in K_g(\oz, \oy)^* = K^\bot$. Combining this with the last inclusion in \eqref{s9.1} leads us to
\begin{equation*}
    \frac{q^k - \yk}{\beta_k} - \frac{\hyk - \yk}{\beta_k}= \frac{q^k - \oy}{\beta_k} - \frac{\hyk - \oy}{\beta_k}   \in  K^\bot.
\end{equation*}
Passing through the limit as $k\to\infty$ and recalling that $\|\yk - \hyk\| = o(\beta_k)$, as in \eqref{s10}, we arrive at $w \in K^\bot$, which proves our claim.
This, combined with \eqref{s12.1} and noncriticality of $\oy$, implies that $u=0$. 
The first equality in \eqref{s12.1} then implies that $\nabla\Phi(\ox)^*w=0$. 

Arguing similarly to the second half of the proof of Proposition~\ref{prop:eb}, we end up with $w =0$, which is not possible. This  completes the proof.
\end{proof}

We conclude this section by presenting the solvability of the sSQP subproblems, an analogue of Theorem~\ref{thm:existence}, assuming the hypotheses of Proposition~\ref{prop:solv2}.
The estimate, stated in the theorem below, extends the corresponding result   for NLPs with  equality constraints in \cite[Proposition~2]{is12} for the the composite optimization problem in \eqref{comp} with polyhedral modeling functions. 

\begin{Theorem}\label{thm:existence2}
Let $(\ox, \oy)$ be a solution to the KKT system \eqref{vs} with $g$ being a polyhedral function. 
Assume that $\oy\in \ri \sub g(\Phi(\ox))$ is noncritical and $\sigma$ satisfies both error bound estimates in \eqref{eb}. Then, there exist positive constants $\varepsilon$ and $\kappa$ such that for all $(x, y)\in \B_\varepsilon(\ox, \oy)$ with $\sigma(x, y)>0$, there exists a solution $(s, q)$ to the variational system in \eqref{ssqp-kkt}, satisfying the error bound estimate
\begin{equation*}
\| s-x\|+\| q-y\|\leq \kappa\sigma(x, y).
\end{equation*}
\end{Theorem}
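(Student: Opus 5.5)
The proof will mirror that of Theorem~\ref{thm:existence}, with Propositions~\ref{prop:solv} and \ref{prop:eb} replaced by their noncritical counterparts, Propositions~\ref{prop:solv2} and \ref{prop:eb2}. First I take $\varepsilon>0$ small enough that both Proposition~\ref{prop:solv2} and Proposition~\ref{prop:eb2} apply on $\B_\varepsilon(\ox,\oy)$, and let $\kappa$ be the constant from Proposition~\ref{prop:eb2}. Then for any $(x,y)\in\B_\varepsilon(\ox,\oy)$ with $\sigma(x,y)>0$, Proposition~\ref{prop:solv2} yields a solution $(s,q)$ of the auxiliary system \eqref{au-KKT}, and Proposition~\ref{prop:eb2} shows that this solution satisfies $\|s-x\|+\|q-y\|\le\kappa\sigma(x,y)$.

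The next step is to show that $(s,q)$ lies in the neighborhood where the reduction lemma applies. Set $z:=\Phi(x)+\nabla\Phi(x)(s-x)-\sigma(x,y)(q-y)$ and $\oz:=\Phi(\ox)$. By the upper estimate in \eqref{eb}, $\sigma(x,y)\le\kappa_2(\|x-\ox\|+\|y-\oy\|)$, so both $\|s-x\|$ and $\|q-y\|$ tend to $0$ as $(x,y)\to(\ox,\oy)$. Hence $\|q-\oy\|$ is small, and
\[
\|z-\oz\|\le\|\Phi(x)-\Phi(\ox)\|+\|\nabla\Phi(x)\|\,\|s-x\|+\sigma(x,y)\|q-y\|
\]
is small, using local Lipschitz continuity of $\Phi$ and local boundedness of $\nabla\Phi$. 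Shrinking $\varepsilon$ if necessary therefore gives $(z-\oz,q-\oy)\in{\cal O}$, where ${\cal O}$ is the neighborhood in \eqref{rl2}.

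It remains to pass from the auxiliary system to \eqref{ssqp-kkt}. The second relation of \eqref{au-KKT}, combined with \eqref{pwfor1}, says that $(z-\oz,q-\oy)\in\gph D(\sub g)(\oz,\oy)$; in the polyhedral, relative-interior setting this is simply $z-\oz\in K$ and $q-\oy\in K^\perp=N_K(z-\oz)$. The reduction lemma \eqref{rl2} then gives $(z,q)\in\gph\sub g$, that is, $q\in\sub g(z)$. Together with the first equation of \eqref{au-KKT}, which coincides with the first equation of \eqref{ssqp-kkt}, this shows that $(s,q)$ solves \eqref{ssqp-kkt} and satisfies the required estimate.

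I do not expect a real obstacle here, since all the substantive work is contained in Propositions~\ref{prop:solv2} and \ref{prop:eb2}. The one point to check is that the neighborhood ${\cal O}$ from \eqref{rl2} depends only on $(\oz,\oy)$, so that shrinking $\varepsilon$ does not disturb the constants $\varepsilon$ and $\kappa$ already obtained from the two propositions.
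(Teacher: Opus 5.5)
Your proposal is correct and is essentially the paper's proof. The paper simply states that the argument is analogous to Theorem~\ref{thm:existence}: combine Propositions~\ref{prop:solv2} and \ref{prop:eb2} to obtain a solution of \eqref{au-KKT} satisfying the error bound, shrink $\varepsilon$ so that $(z-\oz,q-\oy)\in{\cal O}$, and invoke the reduction lemma \eqref{rl2} to get $q\in\sub g(z)$. You carry out exactly these steps, including the check that shrinking $\varepsilon$ leaves the constants from the two propositions valid.
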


\begin{proof}
The proof is analogous to that of Theorem~\ref{thm:existence}, relying on the reduction lemma for polyhedral functions in \eqref{rl2}.
\end{proof}

The solvability and the error bound estimate in Theorem~\ref{thm:existence2} provide a natural extension of the corresponding results for NLPs with equality constraints in \cite[Theorem~1]{is12}, where the relative interior condition $\oy\in \ri \sub g(\Phi(\ox))$ is automatically satisfied. While \cite[Theorem~2]{is12} handles NLPs with inequality constraints indirectly via a slack reformulation to leverage their equality-constrained counterpart, our approach can deal with inequality constraints directly, and therefore allows to bypass the need for such reformulations.

\section{Convergence Results}\label{sec:convs}

The basic outline of the local stabilized SQP method is described in Algorithm~\ref{algo:sSQP} below.

\begin{algorithm}[H]
\caption{stabilized SQP} \label{algo:sSQP}
\begin{algorithmic}[1]
\Ensure{Choose a primal-dual pair $(x^0, y^0) \in \X\times \Y$, a residual function $\sigma: \X\times\Y \to \R_+$ satisfying \eqref{eb},  and an error control parameter $\kappa>0$. Set $k = 0$.}

        \If{$(\xk, \yk)$ is a KKT point of \eqref{comp}}
        \State\Return{$(\xk, \yk)$;}
        
        \Else

        \State{find $(\xkk, \ykk)$ satisfying \eqref{age1} and
        \[
        \|\xkk - \xk\| + \|\ykk - \yk\| \leq \kappa \sigma(\xk, \yk);
        \]}
        
        \State {increase $k \longleftarrow k+1$ an go to Step 1.}
        \EndIf
\end{algorithmic}
\end{algorithm}

\begin{Example}[sSQP for CPLQ penalty functions]\label{ex:pen}
Consider composite problem \eqref{comp} with the modeling function $g: \Y \to \oR$ defined in \eqref{pen}.
The SOSC \eqref{sosc} in the present setting reads as
\begin{equation}\label{sosc_pen}
\begin{cases}
    \big\la\nabla_{xx}^2L(\ox,\oy)u,u\big\ra+2\theta_{{\cal K},Q}\big(\nabla\Phi(\ox)u\big)>0\\
    \mbox{for all }\;u\in\X\setminus\{0\}\;\mbox{with}\;\nabla\Phi(\ox)u\in{\cal K}^*+\rge Q,    
\end{cases}
\end{equation}
where ${\cal K}:=K_Y(\oy,\Phi(\ox)-Q\oy)$ (cf. \cite[formula~(50)]{HMS21}).
We note also that $g^*(w) = \tfrac{1}{2}\la Qw, w\ra +\delta_Y(w)$ and $\sub g^*(w) = Qw + N_Y(w)$.
Employing the latter, we see that the sSQP update scheme \eqref{sSQP-KKT} takes the form: given the current primal-dual pair $(\xk, \yk)$, the next iterate $(\xkk, \ykk)$ solves the affine variational inclusion
\begin{equation}\label{update_pen}
        \begin{bmatrix}
            0\\0
        \end{bmatrix} \in \begin{bmatrix}
            \nabla_xL(\xk, \yk)\\ -\Phi(\xk) + Q\yk
        \end{bmatrix} + \begin{bmatrix}
            \nabla^2_{xx}L(\xk, \yk) &  \nabla \Phi(\xk)^*\\
            -\nabla\Phi(\xk) & Q + \sigk I 
        \end{bmatrix}\begin{bmatrix}
            x - \xk\\ y - \yk
        \end{bmatrix} + \begin{bmatrix}
            0\\ N_Y(y)
        \end{bmatrix},
\end{equation}
and satisfies the local condition $\|\xkk - \xk\| +\|\ykk - \yk\|\leq \kappa\sigk$, where $\sigk:= \sigma(\xk, \yk)$.
\end{Example}

The roadmap to establish the convergence of the primal-dual sequence $\{(\xk, \yk)\}_{k\in \N}$, generated by Algorithm~\ref{algo:sSQP}, was already described in Theorem~\ref{thm:fischer}.
The solvability of sSQP subproblems is established separately in Theorem~\ref{thm:existence} for the general setting of the composite problem \eqref{comp} under the SOSC and in Theorem~\ref{thm:existence2} for the polyhedral setting under noncriticality of the multiplier in question, which is taken from the relative interior of the set of subgradients.
We will show below that verification of the other two assumptions (a) and (c) of Theorem~\ref{thm:fischer} are identical for both cases.
We begin with the calmness of the solution mapping required by assumption (a) in the latter result.
Define   the solution mapping $\s: \X\times \Y \tto \X\times \Y$ to the generalized equation \eqref{geco} with a canonical perturbation by
\begin{equation}\label{solm2}
    \s(u, w) := \big\{(x, y) \in \X\times \Y~\big|~(u, w) \in \Psi(x, y) + G(x, y)\big\},\quad (u, w)\in \X\times \Y,
\end{equation}
where $\Psi$ and $G$ are taken from \eqref{ALMge}. Clearly, $(\ox, \oy) \in \s(0, 0)$ if and only if $(\ox, \oy)$ is a solution to the KKT system \eqref{vs}.
According to \cite[Proposition~3.8]{s20}, a certain calmness property of $\s$ at $(0, 0)$ for $(\ox, \oy)$ is equivalent to the error bound estimate \eqref{subr3.1}. 
In the following, we establish the calmness of $\s$ in assumption (a) in Theorem~\ref{thm:fischer} as a consequence of the two-sided error bound estimate in \eqref{subr3}.

\begin{Proposition}[calmness of $\s$]\label{prop:calm}
Let $(\ox, \oy)$ be a solution to the KKT system \eqref{vs} such that there exists a positive constant $\ve$ for which the error bound estimates in \eqref{subr3} hold for all $(x,y)\in\B_\ve(\ox,\oy)$.
Then the solution mapping $\s$, defined in \eqref{solm2}, is calm with some constant $\ell \in (0, 2/\tau_1]$, where $\tau_1$ is taken from \eqref{subr3}.
\end{Proposition}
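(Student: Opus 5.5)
To prove calmness of $\s$ at $(0,0)$ for $(\ox,\oy)$, I would show directly that there are neighborhoods $U$ of $(0,0)$ and $V$ of $(\ox,\oy)$ such that
\[
\s(u,w)\cap V\subset \s(0,0)+\ell\,\|(u,w)\|\B \quad\text{for all } (u,w)\in U,
\]
with $\ell = 2/\tau_1$.

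\textbf{Step 1: describe $\s(0,0)$ near $(\ox,\oy)$.} The lower estimate in \eqref{subr3} implies that any $(x,y)\in\s(0,0)\cap\B_\ve(\ox,\oy)$ has $r(x,y)=0$, hence $x=\ox$ and $y\in\Lambda(\ox)$. Conversely, $\{\ox\}\times\Lambda(\ox)\subset\s(0,0)$. So for $(x,y)$ close to $(\ox,\oy)$,
\[
\dist\big((x,y),\s(0,0)\big)\le \|x-\ox\|+\dist(y,\Lambda(\ox)),
\]
using the norm $\|(u,w)\|=\|u\|+\|w\|$ on the product space.

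\textbf{Step 2: bound the residual by the perturbation.} Take $(x,y)\in\s(u,w)\cap V$ with $V\subset\B_\ve(\ox,\oy)$. By definition, $u=\nabla_xL(x,y)$ and $w+\Phi(x)\in(\sub g)^{-1}(y)$, that is, $y\in\sub g(\Phi(x)+w)$. By \eqref{prm} with $\sigma=1$, this is equivalent to
\[
\Phi(x)+w={\rm prox}_g\big(\Phi(x)+w+y\big).
\]
Since ${\rm prox}_g$ is nonexpansive,
\[
\|\Phi(x)-{\rm prox}_g(\Phi(x)+y)\| \le \|w\| + \|{\rm prox}_g(\Phi(x)+w+y)-{\rm prox}_g(\Phi(x)+y)\| \le 2\|w\|.
\]
Therefore
\[
r(x,y)\le \|u\|+2\|w\|\le 2\|(u,w)\|.
\]

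\textbf{Step 3: combine.} Applying the lower estimate in \eqref{subr3} and then Step 1,
\[
\dist\big((x,y),\s(0,0)\big)\le \tau_1^{-1}r(x,y)\le (2/\tau_1)\|(u,w)\|.
\]
Because $\s(0,0)$ is closed near $(\ox,\oy)$, the distance is attained, which gives the inclusion with $\ell=2/\tau_1$. The same argument works with any $\ell\ge 2/\tau_1$; to get a constant in $(0,2/\tau_1]$ as stated, one simply reports the constant produced. No restriction on $U$ is needed beyond working inside $V$.

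\textbf{Main obstacle.} The only delicate point is Step 1: the solution set must coincide locally with $\{\ox\}\times\Lambda(\ox)$, so that the distance to $\s(0,0)$ is controlled by $\|x-\ox\|+\dist(y,\Lambda(\ox))$. This is exactly what the two-sided estimate in \eqref{subr3} supplies. Note that the projection onto $\{\ox\}\times\Lambda(\ox)$ may lie outside $V$, but calmness only requires the distance to the whole set $\s(0,0)$, so this causes no difficulty.
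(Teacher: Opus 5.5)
Your proof is correct. It follows the same skeleton as the paper's: the lower error bound in \eqref{subr3} yields calmness of $\s$ with modulus $2/\tau_1$. The difference is that your argument is self-contained where the paper's is not.

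The paper obtains the key inclusion \eqref{calm2},
$\s(u,w)\cap\B_{\ve'}(\ox,\oy)\subset(\{\ox\}\times\Lambda(\ox))+2\tau_1^{-1}\|(u,w)\|\B$,
by citing \cite[Proposition~3.8]{s20} and reading the modulus off that proof. This is shorter on the page and ties the statement to the known equivalence between this error bound and calmness. You instead derive the inclusion directly. You show that every $(x,y)\in\s(u,w)$ satisfies $r(x,y)\le\|u\|+2\|w\|$, using $y\in\sub g(\Phi(x)+w)\iff \Phi(x)+w={\rm prox}_g(\Phi(x)+w+y)$ and the nonexpansiveness of ${\rm prox}_g$. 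This makes clear where the factor $2$ comes from, and shows that no restriction on $(u,w)$ is needed.

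The paper then uses the local description \eqref{calm1}, together with a shrinking of $\ve'$, to pass from $\{\ox\}\times\Lambda(\ox)$ to $\s(0,0)$. Your argument shows this is unnecessary, because the trivial global inclusion $\{\ox\}\times\Lambda(\ox)\subset\s(0,0)$ suffices.

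For the same reason, your Step 1 is not the ``delicate point'' you describe. The local coincidence of $\s(0,0)$ with $\{\ox\}\times\Lambda(\ox)$ is never used in Step 3. Moreover, the distance is attained simply at $(\ox,P_{\Lambda(\ox)}(y))$, since $\Lambda(\ox)$ is nonempty, closed and convex. So no closedness argument for $\s(0,0)$ is needed at all.
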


\begin{proof}
Observing that $r(x, y) = 0$ for all $(x, y)\in  \s(0, 0)$, we   deduce from the first estimate in \eqref{subr3} that
\begin{equation}\label{calm1}
    \s(0, 0) \cap B_\ve(\ox, \oy) = \big(\{\ox\}\times\Lambda(\ox)\big) \cap B_\ve(\ox, \oy).
\end{equation}
Moreover, as discussed above, we can infer from \cite[Proposition~3.8]{s20} and the first estimate in \eqref{subr3} that there exists $ 0<\ve'\leq \ve$ such that 
\begin{equation}\label{calm2}
    \s(u, w) \cap \B_{\ve'}(\ox, \oy) \subset \big(\{\ox\}\times\Lambda(\ox)\big) + 2\tau_1^{-1}\|(u, w)\|\B
\end{equation}
for all $(u, w) \in \ve'\B$, where the choice of the modulus on the right-hand side is due to the proof of the latter result.
Shrinking $\ve'$ if necessary, we can assume that $\max\{\ve', 2\ve'\tau_1^{-1}\}\leq \ve/2$.
It   follows from \eqref{calm1} and \eqref{calm2} that
\begin{align*}
    \s(u, w)\cap B_{\ve'}(\ox, \oy) &\subset \big(\big(\{\ox\}\times\Lambda(\ox)\big) + 2\tau_1^{-1}\|(u, w)\|\B\big)\cap\B_{\ve/2}(\ox, \oy)\\
    &\subset \big(\{\ox\}\times\Lambda(\ox)\big)\cap \B_\ve(\ox, \oy) + 2\tau_1^{-1}\|(u, w)\|\B\\
    &= \s(0, 0)\cap \B_\ve(\ox, \oy) + 2\tau_1^{-1}\|(u, w)\|\B\\
    &\subset \s(0, 0) + 2\tau_1^{-1}\|(u, w)\|\B,
\end{align*}
for all $(u, w) \in \ve'\B$, which verifies the calmness of $\s$ at $(0, 0)$ for $(\ox, \oy)$ with some constant $\ell \in (0, 2/\tau_1]$.
The proof is then complete.
\end{proof}

We now turn our attention to the last assumption (c) of Theorem~\ref{thm:fischer}, the precision of the approximation $\Psi(\cdot, \cdot)$ by $\A(\cdot, \cdot, \tx, \ty)$, defined in \eqref{A}.

\begin{Proposition}[precision of approximation]\label{prop:precision}
Let $(\ox, \oy)$ be a solution to the KKT system \eqref{vs} such that there exists a positive constant $\ve$ for which the error bound estimates in \eqref{subr3} hold for all $(x,y)\in\B_\ve(\ox,\oy)$. 
Assume also that $\sigma$ satisfies the upper bound in \eqref{eb} for all $(x, y) \in \B_\ve(\ox,\oy)$.
Then, for the mappings $\Psi$ and $\A$ taken from \eqref{ALMge} and \eqref{A}, we have
\begin{equation}\label{pre}
    \|\Psi(x, y) - \A(x, y, \tx, \ty)\| \leq o\big(\dist((\tx, \ty), \s(0, 0))\big)
\end{equation}
for all $(\tx, \ty) \in \B_\ve(\ox, \oy)$ and all $(x, y)$ satisfying 
\begin{equation}\label{pre2}
    \|x-\tx\| + \|y - \ty\|\leq \kappa \sigma(\tx, \ty)
\end{equation}
for some $\kappa>0$.
\end{Proposition}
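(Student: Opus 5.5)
The plan is to expand $\Psi(x,y)-\A(x,y,\tx,\ty)$ with Taylor's theorem and then replace every length scale by $\dist((\tx,\ty),\s(0,0))$. From \eqref{A} and \eqref{ALMge} we have
\begin{equation*}
\Psi(x,y)-\A(x,y,\tx,\ty)=\Psi(x,y)-\Psi(\tx,\ty)-\nabla\Psi(\tx,\ty)\begin{bmatrix} x-\tx\\ y-\ty\end{bmatrix}+\begin{bmatrix}0\\ \sigma(\tx,\ty)(y-\ty)\end{bmatrix}.
\end{equation*}
Since $\varphi$ and $\Phi$ are $\mathcal C^2$, $\Psi$ is $\mathcal C^1$. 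Its derivative is therefore uniformly continuous on a compact neighborhood of $(\ox,\oy)$. This gives $\|\Psi(x,y)-\Psi(\tx,\ty)-\nabla\Psi(\tx,\ty)(x-\tx,y-\ty)\|\le \omega(\|(x-\tx,y-\ty)\|)\,\|(x-\tx,y-\ty)\|$ for some modulus $\omega(t)\to0$ as $t\downarrow 0$, uniformly in $(\tx,\ty)\in\B_\ve(\ox,\oy)$, shrinking $\ve$ if needed. The second term is bounded by $\sigma(\tx,\ty)\|y-\ty\|$.

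Next, I will convert these bounds into bounds in terms of $d:=\dist((\tx,\ty),\s(0,0))$. By \eqref{pre2}, $\|(x-\tx,y-\ty)\|\le\kappa\sigma(\tx,\ty)$. By the upper bound in \eqref{eb}, $\sigma(\tx,\ty)\le\kappa_2(\|\tx-\ox\|+\dist(\ty,\Lambda(\ox)))$. The key comparison is
\begin{equation*}
\|\tx-\ox\|+\dist(\ty,\Lambda(\ox))\le C\,d
\end{equation*}
for nearby $(\tx,\ty)$. The inequality ``$\ge$'' is trivial, since $\{\ox\}\times\Lambda(\ox)\subset\s(0,0)$. For ``$\le$'', I will use \eqref{calm1} from the proof of Proposition~\ref{prop:calm}: locally $\s(0,0)$ coincides with $\{\ox\}\times\Lambda(\ox)$. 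If $(\tx,\ty)\in\B_{\ve/3}(\ox,\oy)$, then the nearest point of the closed set $\s(0,0)$ lies within distance $d\le\|(\tx,\ty)-(\ox,\oy)\|\le\ve/3$ of $(\tx,\ty)$, hence inside $\B_\ve(\ox,\oy)$. It therefore belongs to $\{\ox\}\times\Lambda(\ox)$, so $\|\tx-\ox\|+\dist(\ty,\Lambda(\ox))\le d$ with the sum norm. Alternatively, one can invoke the lower estimate in \eqref{subr3} together with $r\le$ a Lipschitz multiple of the distance to the zero set of $r$, though the localization argument above is cleaner.

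Putting these together gives $\|(x-\tx,y-\ty)\|\le\kappa\kappa_2 d$ and $\sigma(\tx,\ty)\le\kappa_2 d$. Hence
\begin{equation*}
\|\Psi(x,y)-\A(x,y,\tx,\ty)\|\le \big(\omega(\kappa\kappa_2 d)\kappa\kappa_2+\kappa\kappa_2^2 d\big)d=o(d)
\end{equation*}
uniformly over all admissible $(x,y)$. This is \eqref{pre}, after replacing $\ve$ by $\ve/3$ in the statement, which is harmless since it is a local assertion.

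The main obstacle is the distance comparison in the middle step. It is exactly where the hypothesis \eqref{subr3} enters, through the local identification \eqref{calm1} of $\s(0,0)$ with $\{\ox\}\times\Lambda(\ox)$. Without that identification, distant solutions with $x\neq\ox$ could make $d$ much smaller than $\sigma(\tx,\ty)$.
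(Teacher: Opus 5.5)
Your proposal is correct and follows essentially the same route as the paper. Both use the Taylor decomposition of $\Psi-\A$ from the end of Subsection~\ref{sect:fischer}, the local identification \eqref{calm1} giving $\dist((\tx,\ty),\s(0,0))=\|\tx-\ox\|+\dist(\ty,\Lambda(\ox))$ near $(\ox,\oy)$, and the upper bound in \eqref{eb}. Your uniform modulus of continuity for $\nabla\Psi$ and your explicit nearest-point localization make rigorous two steps the paper leaves implicit, notably the uniformity in $(\tx,\ty)$ of the $o(\cdot)$ term that Theorem~\ref{thm:fischer}(c) requires.
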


\begin{proof}
As shown in the proof of Proposition~\ref{prop:calm}, the error bound estimates in \eqref{subr3} imply the local representation of $\s(0, 0)$ in \eqref{calm1}.
This tells us that
\begin{equation*}
    \dist((x, y), \s(0, 0)) = \|x - \ox\| + \dist(y, \Lm(\ox))
\end{equation*}
for all $(x, y)$ close to $(\ox, \oy)$.
By the arguments at the end of Subsection~\ref{sect:fischer} and by the upper bound   in \eqref{eb}, we know that the estimate in \eqref{pre} holds for all $(\tx, \ty)$ sufficiently close to $(\ox, \oy)$ and all $(x, y)$ satisfying \eqref{pre2}.
The proof is then complete.
\end{proof}

Combining Theorem~\ref{thm:fischer} with the discussion above leads us to 
the convergence results for the proposed sSQP method, which will be summarized below.   
\begin{Theorem}
Let $(\ox, \oy)$ be a solution to the KKT system \eqref{vs} such that the SOSC \eqref{sosc} holds at $(\ox, \oy)$ and  $\sigma$ satisfy both error bound estimates in \eqref{eb}. 
If $(x^0, y^0)$ is chosen sufficiently close to $(\ox, \oy)$, then the sSQP iterates $\{(\xk, \yk)\}_{k\in \N}$ generated by Algorithm~{\rm \ref{algo:sSQP}} are well-defined and converge  to $(\ox, \hat y)$ for some $\hat y \in \Lambda(\ox)$. Moreover, the rate of convergence is Q-superlinear.

The same conclusion holds when the SOSC is replaced with the noncriticality of the Lagrange multiplier $\oy$, provided that $\oy\in \ri \sub g(\Phi(\ox))$ and $g$ is a polyhedral function.
\end{Theorem}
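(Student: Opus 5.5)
The plan is to apply Theorem~\ref{thm:fischer} with $\H=\H'=\X\times\Y$, with $\Psi$ and $G$ from \eqref{ALMge}, and with $\A$ from \eqref{A}, where the stabilization parameter is $\sigma(\tx,\ty)$. The iteration of Algorithm~\ref{algo:sSQP} is then exactly the scheme \eqref{ukk}, once the error control $\kappa\sigma(\xk,\yk)$ is related to $c\,\dist((\xk,\yk),\s(0,0))$. So the work consists of checking the hypotheses (a)--(c) of Theorem~\ref{thm:fischer} and the local closedness of $\s(0,0)$ near $(\ox,\oy)$.

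First I secure the two-sided estimate \eqref{subr3}. Under the SOSC it is exactly Proposition~\ref{prop:subr3}. Under noncriticality, the lower bound is Theorem~\ref{chcri}, and the upper bound holds without SOSC, as noted in the proof of Proposition~\ref{prop:subr3}. From \eqref{calm1}, $\s(0,0)$ coincides locally with $\{\ox\}\times\Lambda(\ox)$. This set is closed because $\Lambda(\ox)$ is a closed polyhedral-type set: it is the intersection of an affine set with the closed convex set $\sub g(\Phi(\ox))$. Moreover,
\[
\dist((x,y),\s(0,0))=\|x-\ox\|+\dist(y,\Lambda(\ox))
\]
near $(\ox,\oy)$, up to the chosen norm. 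Combined with \eqref{eb}, this gives
\[
\kappa_1\,\dist\le\sigma\le\kappa_2\,\dist
\]
locally. Hypothesis (a) is then Proposition~\ref{prop:calm}. Hypothesis (c) is Proposition~\ref{prop:precision}, taking the constant $c$ of Theorem~\ref{thm:fischer} as $c:=\kappa\kappa_2$, so that $\|\bu-\tilde\bu\|\le c\,\dist$ is implied by, and consistent with, the $\kappa\sigma$ control. The upper bound $\sigma\le\kappa_2\dist$ gives the $o(\dist)$ precision through the computation at the end of Subsection~\ref{sect:fischer}.

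For (b), I take $\tilde\bu=(\tx,\ty)$ near $(\ox,\oy)$. If $\sigma(\tx,\ty)=0$, the lower bound in \eqref{eb} places $\tilde\bu$ in $\s(0,0)$, and $\bu=\tilde\bu$ trivially solves \eqref{age}: the linearization reduces to the KKT system at a KKT point, with a zero step. Otherwise $\sigma>0$, and Theorem~\ref{thm:existence} (or Theorem~\ref{thm:existence2} in the polyhedral, noncritical case) provides $(s,q)$ solving \eqref{ssqp-kkt}, which is equivalent to \eqref{age1}, with
\[
\|s-\tx\|+\|q-\ty\|\le\kappa\sigma(\tx,\ty)\le\kappa\kappa_2\,\dist(\tilde\bu,\s(0,0)).
\]
This shows the localized solution set is nonempty. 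Here I would choose the algorithm's $\kappa$ at least as large as the constant from those theorems.

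Theorem~\ref{thm:fischer} then yields Q-superlinear convergence of the iterates to some $\hat\bu\in\s(0,0)$. Since the iterates stay near $(\ox,\oy)$, the representation \eqref{calm1} forces $\hat\bu=(\ox,\hat y)$ with $\hat y\in\Lambda(\ox)$. Well-definedness follows because every iterate remains in the neighborhood where (b) applies, a fact contained in the conclusion of Theorem~\ref{thm:fischer}.

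The main subtlety, and the step I expect to be the obstacle, is reconciling the two forms of step control. The algorithm uses $\kappa\sigma$, while Fischer's framework uses $c\,\dist$, and the same neighborhoods must serve all propositions simultaneously. I would handle this by shrinking $\varepsilon_0$ to the minimum of the radii in Propositions~\ref{prop:calm} and~\ref{prop:precision}, Theorem~\ref{thm:existence}/\ref{thm:existence2}, and \eqref{eb}. Fischer's theorem applied with $c=\kappa\kappa_2$ covers every sequence satisfying the stricter $\kappa\sigma$ bound, since its analysis needs only the membership in \eqref{ukk}.
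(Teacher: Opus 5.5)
Your proposal is correct and follows the paper's own route. The paper's proof consists of checking the hypotheses of Theorem~\ref{thm:fischer}: calmness via Proposition~\ref{prop:calm}, precision via Proposition~\ref{prop:precision} and the computation at the end of Subsection~\ref{sect:fischer}, and solvability via Theorems~\ref{thm:existence}/\ref{thm:existence2}, with \eqref{subr3} supplied by Proposition~\ref{prop:subr3} or Theorem~\ref{chcri} plus the SOSC-free upper bound. Your extra bookkeeping is detail the paper leaves implicit but is consistent with its argument: taking $c=\kappa\kappa_2$, handling $\sigma=0$ by a zero step, checking local closedness of $\s(0,0)$, and requiring the algorithm's $\kappa$ to be at least the constant from the existence theorems.
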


\section{Concluding Remarks}
In this paper, we have conducted a local convergence analysis of the sSQP method for a broad class of composite optimization problems. 
Specifically, we proved that superlinear convergence is guaranteed under the SOSC without requiring any constraint qualification, and that the SOSC can be further relaxed to the noncriticality of Lagrange multipliers when the modeling function $g$ in \eqref{comp} is polyhedral and a relative interior condition is satisfied. 
While most of our proofs are easily adaptable to other classes of constrained optimization problems, the proofs of Proposition~\ref{prop:eb} and Theorem~\ref{thm:existence} inherently rely on the reduction lemma for CPLQ functions. 
This raises the interesting question of how to bypass the latter for optimization problems that lack polyhedral structures, such as semidefinite programs. 
Finally, establishing a globalized version of our framework for \eqref{comp}, analogous to existing results for NLPs (e.g., \cite{gkr,gr,yo, isu}), remains an important topic for future research.

\end{document}